\newcommand{\Z}{\mathbb{Z}}
\newcommand{\F}{\mathbb{F}}
\newtheorem{theorem}{Theorem}[section]
\newtheorem{lemma}[theorem]{Lemma}
\newtheorem{definition}[theorem]{Definition}
\newtheorem{proposition}[theorem]{Proposition}
\newtheorem{corollary}[theorem]{Corollary}
\newtheoremstyle{BoldRemark} % name
{\topsep}                    % Space above
{\topsep}                    % Space below
{\upshape}                   % Body font
{}                           % Indent amount
{\bfseries}                  % Theorem head font
{.}                          % Punctuation after theorem head
{.5em}                       % Space after theorem head
{}  % Theorem head spec (can be left empty, meaning ‘normal’)
\theoremstyle{BoldRemark}
\newtheorem{example}[theorem]{Example}
\date{}
\author{Noah Wisdom}
\title{A classification of $C_{p^n}$-Tambara fields}
\begin{document}
\maketitle
\begin{abstract}
	Tambara functors arise in equivariant homotopy theory as the structure adherent to the homotopy groups of a coherently commutative equivariant ring spectrum. We show that if $k$ is a field-like $C_{p^n}$-Tambara functor, then $k$ is the coinduction of a field-like $C_{p^s}$-Tambara functor $\ell$ such that $\ell(C_{p^s}/e)$ is a field. If this field has characteristic other than $p$, we observe that $\ell$ must be a fixed-point Tambara functor, and if the characteristic is $p$, we determine all possible forms of $\ell$ through an analysis of the behavior of the Frobenius endomorphism and the trace of a $C_p$-Galois extension.
\end{abstract}
\tableofcontents

\section{Introduction}

For $G$ a finite group, $G$-Tambara functors are the basic objects of study in equivariant algebra. They arise in homotopy theory as the natural structure adherent to the homotopy groups of a $G$-$\mathbb{E}_\infty$ ring spectrum, though they additionally arise through many important situations in commutative algebra. For example any finite Galois field extension gives rise to a $Gal$-Tambara functor, and the representation rings of $G$ and its subgroups naturally have the structure of a Tambara functor.

Roughly speaking, the notion of a $G$-Tambara functor is obtained by abstracting the notion of a Galois extension with Galois group $G$. More precisely, in this setting, one has intermediate fields for each subgroup $H \subset G$ which have residual Weyl group $W_G H$ action, contravariant inclusions between intermediate fields, as well as covariant transfer and norm maps between intermediate fields, all satisfying formulae relating various compositions. In a $G$-Tambara functor, we ask merely for rings $k(G/H)$ for each subgroup of $G$, and do not require that restriction maps are inclusions. Here we still have transfers, norms, and Weyl group actions, whose compositions satisfy similar formulae. A morphism of $G$-Tambara functors is a collection of ring maps, one for each level $G/H$, which commute with restrictions, norms, transfers, and Weyl group actions.

While $G$-Tambara functors are the equivariant algebra analogues to rings, Nakaoka \cite{Nak11a} \cite{Nak11b} has defined field-like Tambara functors as those nonzero $k$ for which every morphism $k \rightarrow \ell$ with $\ell \neq 0$ is monic. In particular, Nakaoka defines an ideal of a Tambara functor and shows that every Nakaoka ideal is obtained as the collection of kernels at each level of a map of $G$-Tambara functors. Next, Nakaoka observes \cite[Theorem 4.32]{Nak11a} that $k$ is field-like if and only if $k(G/e)$ has no nontrivial $G$-invariant ideals and all restriction maps in $k$ are injective. Additionally, upcoming work of Schuchardt, Spitz, and the author \cite{SSW24} classify the algebraically closed (or Nullstellensatzian) fields in Tambara functors: they are precisely the coinductions of algebraically closed fields.

Fields play an important role in homotopy theory and higher algebra; the rings $\F_p$ are among the most fundamental objects, viewed as $\mathbb{E}_\infty$-ring spectra via the Eilenberg-MacLane construction. While this construction makes sense for any discrete ring, the most powerful computational tools of this form are usually obtained by feeding in a field. In equivariant homotopy theory, there is a similar Eilenberg-MacLane construction, although in the literature, computations are typically carried out with respect to the constant Tambara functors associated to fields (or the initial Tambara functor). These are indeed field-like Tambara functors, although they do not have the property that all of their Mackey functor modules are free! On the other hand, there are many other Tambara fields, for which there are relatively few computations in the literature, which do have the property that all of their Mackey functor modules are free (namely those which are coinduced from fields). We hope that the results of this article will serve as a source of inspiration for equivariant computations. For example, we pose the following question: what are the $RO(C_{p^n})$-graded homotopy groups of all $C_{p^n}$-Tambara fields?

We aim to give a complete classification of the field-like $C_{p^n}$-Tambara functors, for $C_{p^n}$ the cyclic group of order $p^n$. The impetus of this work is the following observation of David Chan and Ben Spitz \cite{CS24}. They showed that if $k$ is field-like, then $k(G/e)$ is a product of copies of a field $\F$ permuted transitively by the $G$-action. Despite the fact that this may be deduced relatively quickly from Nakaoka's results, it suggests that an enormous amount of structure on a Tambara functor is forced by the field-like condition. To capture the special case of the Chan-Spitz result for which $k(G/e)$ is a field, we introduce the following definition.

\begin{definition}
Let $k$ be a field-like $G$-Tambara functor. If $k(G/e) \cong Fun(G/H,R)$ for some $H$-ring $R$ and proper subgroup $H \subset G$, we call $k$ separated. Otherwise we call $k$ clarified. 
\end{definition}

The word ``clarified" is meant to evoke the mental picture of clarified butter. The source of the terminology arises from future work of the author, in which a notion of ``clarified" $G$-Tambara functor is defined which generalizes the above definition. Additionally, a functor from $G$-Tambara functors to clarified $G$-Tambara functors is constructed, which the author calls the ``clarification" functor.

If a field-like Tambara functor $k$ is separated, we may express this suggestively as $k(G/e) \cong \textrm{Coind}_H^G \F$, where $\textrm{Coind}_H^G$ is the coinduction functor from $H$-rings to $G$-rings, right adjoint to the restriction morphism. A similar right adjoint exists on the level of Tambara functors, also called coinduction and written $\textrm{Coind}_H^G$. To reduce clutter, we introduce the notation $\textrm{Coind}_i^n$ for the coinduction from $C_{p^i}$ to $C_{p^n}$ (and $\textrm{Res}_i^n$ for the restriction from $C_{p^n}$ to $C_{p^i}$).

\begin{theorem}
	For $C_{p^n}$ the cyclic order $p^n$ group, if $k$ is a field-like $C_{p^n}$-Tambara functor, then \[ k \cong \textnormal{Coind}_i^n \ell \] for some clarified $C_{p^i}$-Tambara functor $\ell$.
\end{theorem}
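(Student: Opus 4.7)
My approach is to use Chan-Spitz to single out a distinguished idempotent at the bottom of $k$, lift it along $k$ via norms, and let $\ell$ be the slice cut out by this idempotent. By Chan-Spitz, $k(C_{p^n}/e)\cong\textnormal{Fun}(C_{p^n}/C_{p^i},\F)$ as a $C_{p^n}$-ring for some field $\F$ and some $0\le i\le n$ (every subgroup of $C_{p^n}$ being a $C_{p^j}$). Let $e_0\in k(C_{p^n}/e)$ be the idempotent at the trivial coset, which is $C_{p^i}$-fixed. For each $j\le i$, define $E_0^{(j)}:=\textnormal{nm}_e^{C_{p^j}}(e_0)\in k(C_{p^n}/C_{p^j})$. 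A short computation using the restriction-of-norm double coset formula and the fact that every element of $C_{p^j}\subseteq C_{p^i}$ stabilizes $e_0$ yields $\textnormal{res}(E_0^{(j)})=e_0$; injectivity of restrictions in a field-like Tambara functor then promotes $E_0^{(j)}$ to a genuine idempotent.

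With these lifts in hand, set $\ell(C_{p^i}/C_{p^j}):=k(C_{p^n}/C_{p^j})\cdot E_0^{(j)}$ for $j\le i$. I want this to assemble into a $C_{p^i}$-Tambara functor: restriction compatibility reduces to the identity $\textnormal{res}(E_0^{(j)})=E_0^{(j')}$ for $j'\le j\le i$ (both sides being idempotent lifts of $e_0$); transfer compatibility follows from Frobenius reciprocity, $\textnormal{tr}(xE_0^{(j')})\cdot E_0^{(j)}=\textnormal{tr}(xE_0^{(j')}\cdot\textnormal{res}(E_0^{(j)}))=\textnormal{tr}(xE_0^{(j')})$; norm compatibility follows from multiplicativity together with the tower identity $\textnormal{nm}_{C_{p^{j'}}}^{C_{p^j}}\circ\textnormal{nm}_e^{C_{p^{j'}}}=\textnormal{nm}_e^{C_{p^j}}$, giving $\textnormal{nm}(xE_0^{(j')})=\textnormal{nm}(x)E_0^{(j)}$; and the Weyl action of $C_{p^i}/C_{p^j}$ descends because it fixes $E_0^{(j)}$. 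Since $\ell(C_{p^i}/e)\cong\F$ is a field, $\ell$ is field-like by Nakaoka's criterion, and since $\ell(C_{p^i}/e)$ is a single field it is automatically clarified.

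It remains to verify that the adjoint $\Phi\colon k\to\textnormal{Coind}_i^n\ell$ of the structural morphism $\textnormal{Res}_i^n k\to\ell$ defined by $x\mapsto xE_0^{(j)}$ is a levelwise isomorphism. For $j\le i$, the Weyl translates $E_g^{(j)}:=g\cdot E_0^{(j)}$ indexed by $g\in C_{p^n}/C_{p^i}$ form a complete orthogonal system in $k(C_{p^n}/C_{p^j})$ summing to $1$, giving $k(C_{p^n}/C_{p^j})\cong\ell(C_{p^i}/C_{p^j})^{p^{n-i}}$ and matching the coinduction formula. For $j>i$, the lifts of the orbit-sum idempotents are $\textnormal{tr}_{C_{p^i}}^{C_{p^j}}(E_g^{(i)})\in k(C_{p^n}/C_{p^j})$, which restrict to $\sum_{h\in C_{p^j}\cdot g}E_h^{(i)}$; the identity $\textnormal{res}_{C_{p^i}}^{C_{p^j}}\bigl(\textnormal{tr}_{C_{p^i}}^{C_{p^j}}(wE_0^{(i)})\bigr)\cdot E_0^{(i)}=wE_0^{(i)}$ provides a section of the projection onto each summand and yields $k(C_{p^n}/C_{p^j})\cong\ell(C_{p^i}/C_{p^i})^{p^{n-j}}$. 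I anticipate the main obstacle to lie in the middle step: verifying that the $E_0^{(j)}$-slice is closed under the full Tambara structure (not merely the Mackey structure) requires careful bookkeeping with norm double coset formulas, and the argument rests on the idempotents $E_0^{(j)}$ being mutually compatible under restrictions, transfers, and norms across all $j\le i$.
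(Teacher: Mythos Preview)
Your approach is essentially the same as the paper's: both arguments lift the primitive idempotent $e_0$ via norms to obtain compatible idempotents $E_0^{(j)}$ at all levels $j\le i$ (this is the content of the paper's Lemma~3.3 and Corollary~3.4), and both use the transfer together with the double coset formula to handle the levels $j>i$ (this is Lemma~3.5). The packaging differs slightly---you cut out $\ell$ as an idempotent slice of $\textrm{Res}_i^n k$ and invoke the $(\textrm{Res},\textrm{Coind})$ adjunction to produce a genuine Tambara map $\Phi$, whereas the paper embeds everything in $\textrm{Coind}_i^n\underline{\F}$ and compares images---but the underlying mechanism is identical.

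One spot is thinner than it should be. In the range $j>i$ you exhibit the idempotents $\textrm{tr}_{C_{p^i}}^{C_{p^j}}(E_g^{(i)})$ and the identity $\textrm{res}(\textrm{tr}(wE_0^{(i)}))\cdot E_0^{(i)}=wE_0^{(i)}$, which shows that $w\mapsto\textrm{tr}(wE_0^{(i)})$ embeds $\ell(C_{p^i}/C_{p^i})$ into the factor cut out by $\textrm{tr}(E_0^{(i)})$. But you have not shown the reverse containment: a section alone does not force the factor to be \emph{exactly} $\ell(C_{p^i}/C_{p^i})$ rather than something larger. The missing half is that the ring map $y\mapsto \textrm{res}(y)\cdot E_0^{(i)}$ from the factor to $\ell(C_{p^i}/C_{p^i})$ is injective, which follows because restriction in a field-like Tambara functor is injective and its image lands in the $C_{p^j}/C_{p^i}$-diagonal of $\prod_h E_h^{(i)}k(C_{p^n}/C_{p^i})$; alternatively, check directly that $\textrm{tr}(\textrm{res}(y)E_0^{(i)})=y$ for $y$ in the factor using injectivity of $\textrm{res}$. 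Once you add that line, the argument is complete.
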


This reduces the classification problem of $C_{p^n}$-Tambara fields to those which are clarified, ie have $C_{p^n}/e$ level a field. If the characteristic of this field is prime to $p$, or the $C_{p^n}$-fixed point subfield is perfect, then the classification of such Tambara fields is straightforward.

\begin{theorem}
\label{thm:two-cases-of-clarified-fields}
	Suppose $k$ is a clarified field-like $C_{p^n}$-Tambara functor. If either
\begin{enumerate}
	\item the characteristic of $k(C_p/e)$ is not $p$, or
	\item $k(C_{p^n}/e)^{C_{p^n}}$ is perfect
\end{enumerate} then $k$ is canonically isomorphic to the fixed-point Tambara functor associated to $k(C_{p^n}/e)$, ie the restriction maps determine isomorphisms $k(C_{p^n}/C_{p^i}) \rightarrow k(C_{p^n}/e)^{C_{p^i}}$.
\end{theorem}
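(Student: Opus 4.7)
The plan is to exhibit a canonical injection of Tambara functors $\phi\colon k \hookrightarrow \underline{F}$, where $F := k(C_{p^n}/e)$, regarded as a $C_{p^n}$-ring via its Weyl action, and $\underline{F}$ denotes the associated fixed-point Tambara functor with $\underline{F}(C_{p^n}/C_{p^i}) = F^{C_{p^i}}$. The component of $\phi$ at $C_{p^n}/C_{p^i}$ will be $\mathrm{res}_e^{C_{p^i}}\colon k(C_{p^n}/C_{p^i}) \to F$, which lands in $F^{C_{p^i}}$ by a standard Mackey-functor computation. The field-like hypothesis makes each component injective; that these components collectively form a Tambara map follows from the standard double-coset formulas for transfers and norms (which, for nested subgroups of an abelian group, reduce to the formulas defining the transfer and norm in $\underline{F}$), together with the fact that all the restrictions in $\underline{F}$ are just inclusions of fixed subrings.

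With $\phi$ in hand, the theorem reduces to showing $k(C_{p^n}/C_{p^i}) = F^{C_{p^i}}$ inside $F$ for each $i$. Write $J \subseteq C_{p^n}$ for the kernel of the action on $F$, so that $F/F^{C_{p^n}}$ is Galois with group $C_{p^n}/J$. A direct computation shows that $\mathrm{tr}_e^{H}(x)$ restricts to $|H\cap J|\,\mathrm{Tr}_{F/F^{H}}(x)$ for $x \in F$. In case (1), the factor $|H \cap J|$ is a power of $p$ and hence invertible in $F$, while the field trace is surjective by separability of the Galois extension; so $\mathrm{tr}_e^H$ already hits all of $F^H$, and we conclude $k(C_{p^n}/H) = F^H$.

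Case (2) is the substantive one. Perfectness of $K := F^{C_{p^n}}$ propagates upward along the finite extension $F/K$, so $F$ and each intermediate fixed field is perfect. The argument splits along $J$. For $H \subseteq J$, where the action is trivial, the norm $N_e^H$ restricts to $x \mapsto x^{|H|}$, whose image $F^{|H|}$ equals $F = F^H$ by perfectness; this handles every subgroup of $J$, and in particular yields $k(C_{p^n}/J) = F$. For $H \supsetneq J$, the transfer $\mathrm{tr}_J^H$ starting from the now-established $k(C_{p^n}/J) = F$ restricts to the field trace $\mathrm{Tr}_{F/F^H}$, which again surjects onto $F^H$ by separability of the Galois extension $F/F^H$. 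So $k(C_{p^n}/H) = F^H$ in this case as well.

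The chief obstacle, and the reason for the perfectness hypothesis, is precisely the non-faithful case in characteristic $p$: once $J$ is nontrivial, every transfer from $C_{p^n}/e$ to a higher level is annihilated by the factor $|H\cap J|$, so one must bootstrap surjectivity from the multiplicative norm, which by itself only supplies $p$-th powers. Perfectness exactly closes that gap; without it one can exhibit field-like $C_{p^n}$-Tambara functors whose levels are proper subfields of the form $F^{p^{a_i}}\subsetneq F^{C_{p^i}}$, showing that the hypothesis in case (2) is not an artifact of the method.
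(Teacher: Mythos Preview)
Your proof is correct and follows the same overall strategy as the paper: set up the canonical injection $k \hookrightarrow \underline{F}$ and then show level-wise surjectivity of restriction using the double coset formula for transfer (case 1) and norm (case 2).

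The only notable difference is in the execution of case (2). You split along the kernel $J$ of the action, using the norm for $H \subseteq J$ and then bootstrapping with the transfer $\mathrm{tr}_J^H$ for $H \supsetneq J$. The paper avoids this case split by using the norm uniformly: for any $s$, restricting the norm $N_e^{C_{p^s}}$ to the $C_{p^s}$-fixed points and then restricting back gives $x \mapsto x^{p^s}$ regardless of whether the action is faithful on that subgroup, so perfectness of $F^{C_{p^s}}$ (inherited from $F^{C_{p^n}}$) already forces surjectivity. Your route is slightly longer but has the virtue of making explicit the connection to the separable-trace argument that the paper later uses to handle the faithful-action case in characteristic $p$.
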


In the case of characteristic $p$ with $k(C_{p^n}/e)^{C_{p^n}}$ nonperfect, it turns out we may still classify all possible structure. Roughly speaking, a clarified field-like $C_{p^n}$-Tambara functor is obtained by choosing a descending collection of subrings of a field with $C_{p^n}$-action. The chief obstruction to an arbitrary collection of subrings forming a $C_{p^n}$-Tambara functor is that they must contain the appropriate norms and transfers. In particular, one may first show that all such subrings must be subfields which contain the image the $C_{p^n}$-fixed points under the $n$th iterate of the Frobenius endomorphism.

With this niceness condition, we describe how any clarified $C_{p^n}$-Tambara field $k$ of characteristic $p$ may be constructed from suitably compatible clarified $C_{p^{n-1}}$-Tambara and $C_p$-Tambara fields of characteristic $p$, respectively $\ell_t$ and $\ell_b$ (along with one additional minor piece of information). Recursively, this reduces the classification to clarified $C_p$-Tambara fields $k$ of characteristic $p$. If the $C_p$ action on $k(C_p/e)$ is nontrivial, then it turns out that $k$ is again a fixed-point Tambara functor.

\begin{proposition}
	Let $k$ be a $C_p$-Tambara field and let $C_p$ act nontrivially on $k(C_p/e)$. Then the canonical map $k \rightarrow \underline{k(C_p/e)}$ is an isomorphism.
\end{proposition}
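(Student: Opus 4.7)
Since $k$ is field-like, Nakaoka's characterization tells us all restriction maps in $k$ are injective, so the canonical map $k \to \underline{k(C_p/e)}$ is automatically mono at each level. The task reduces to establishing surjectivity at the top: every $C_p$-fixed element of $k(C_p/e)$ should lie in the image of the restriction $\mathrm{res}\colon k(C_p/C_p) \to k(C_p/e)$.

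The plan is to exploit the Mackey double-coset formula, which in this setting yields $\mathrm{res}(\mathrm{tr}(x)) = \sum_{g \in C_p} g(x)$ for all $x \in k(C_p/e)$. Hence the image of $\mathrm{res}$ contains the image of the trace map $\mathrm{Tr}\colon k(C_p/e) \to k(C_p/e)^{C_p}$, $x \mapsto \sum_g g(x)$, and it suffices to show that $\mathrm{Tr}$ is surjective.

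By the Chan-Spitz structure theorem, $k(C_p/e)$ is a product of copies of a field $\F$ transitively permuted by $C_p$. Since $C_p$ has no proper nontrivial subgroups, the nontriviality hypothesis confines us to two cases: either $k(C_p/e) = \F$ with $C_p$ acting faithfully, or $k(C_p/e) = \F^p$ with $C_p$ cyclically permuting factors. In the first case, $\F/\F^{C_p}$ is a Galois extension of degree $p$ by Artin's theorem, hence separable, and the trace of a finite separable field extension is always surjective. In the second case, $k(C_p/e)^{C_p}$ is the diagonal $\F$, and $\mathrm{Tr}(a, 0, \ldots, 0) = (a, a, \ldots, a)$ attains an arbitrary diagonal element. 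The only delicate point one might worry about is the Galois-theoretic surjectivity in characteristic $p$, but the fact that a nontrivial $C_p$-action on a field is automatically faithful forces $\F/\F^{C_p}$ to be Galois, hence separable, via Artin — regardless of characteristic — so no genuine obstacle arises.
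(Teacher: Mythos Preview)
Your proof is correct and follows essentially the same approach as the paper: reduce via the double coset formula to surjectivity of the trace $k(C_p/e) \to k(C_p/e)^{C_p}$, then invoke Artin's lemma to see the extension is Galois, hence separable, so the trace surjects. The only difference is that you explicitly treat the separated case $k(C_p/e)\cong\F^p$, whereas the paper is working in the clarified setting at this point and handles the separated case earlier via coinduction; your direct trace computation there is valid and harmless.
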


On the other hand, if the $C_p$ action on $k(C_p/e)$ is trivial, the classification is straightforward.

\begin{proposition}
	A clarified $C_p$-Tambara field of characteristic $p$ with trivial $C_p$-action on the bottom level $C_p/e$ is the same data as a choice of field $k(C_p/e)$ and subfield $k(C_p/C_p)$ which contains the image of the Frobenius endomorphism on $k(C_p/e)$.
\end{proposition}

In section 2, we review the necessary background on field-like Tambara functors and coinduction. Section 3 provides the reduction of the classification problem to clarified Tambara functors. Finally, section 4 explains how to construct any clarified $C_{p^n}$-Tambara functor from clarified $C_p$-Tambara functors, and classifies all clarified $C_p$-Tambara functors.

\subsection*{Acknowledgements}

The author would like to thank his advisor, Mike Hill, for many deep and insightful conversations. Additionally, the author thanks David Chan for sharing the proof of Proposition \ref{prop:Chan-Spitz}, due to David Chan and Ben Spitz. The author thanks Jason Schuchardt for noticing that the argument of Proposition \ref{prop:fields-of-char-not-p} applied even in when $k$ is not clarified, and David Chan for catching a mistake in an earlier draft. Next, the author thanks Alley Koenig for suggesting the name ``clarified". Finally, the author thanks Ben Szczesny and Haochen Cheng for helpful conversations.

\section{Recollections on Tambara functors}

For a complete introduction to Tambara functors, see \cite{Str12}. Recall that, for $G$ a finite group, a $G$-Tambara functor $k$ is roughly the following data:
\begin{enumerate}
	\item Rings $k(G/H)$ for each transitive $G$-set $G/H$. We say $k(G/H)$ is in \textit{level} $G/H$, and refer to $k(G/e)$ (resp. $k(G/G)$) as the \textit{bottom} level (resp. \textit{top}).
	\item Ring maps $k(G/H) \rightarrow k(G/K)$ for every morphism of $G$-sets $G/K \rightarrow G/H$.
	\item Multiplicative norm and additive transfer maps $k(G/H) \rightarrow k(G/K)$ for every morphism of $G$-sets $G/H \rightarrow G/K$.
\end{enumerate}

Note that the Weyl group $W_H = N_H/H$ of $H \subset G$ describes the automorphisms of the transitive $G$-set $G/H$, hence the rings $k(G/H)$ all possesses Weyl group actions, which are intertwined by the restriction maps. The norm, transfer, and restriction maps are required to satisfy various formulae. One of these is the double coset formula, which we describe here under the assumption that $G$ is abelian. For $H \subset L$, we have that the composition of the transfer $T_L^H : k(G/H) \rightarrow k(G/L)$ followed by restriction $R_L^H : k(G/L) \rightarrow k(G/H)$ is equal to the sum of the Weyl group orbits \[ R_L^H T_L^H = \Sigma_{g \in L/H} c_g \] where $c_g$ denotes the action of $g \in G \rightarrow W_H$ on $k(G/H)$. An analogous formula holds for the norm in place of the transfer, where the sum is replaced with a product.

Finally, given a Tambara functor $k$, we may identify it with the unique extension to a product-preserving functor from finite $G$-sets to rings; by product preserving, we mean $k(G/H \sqcup G/K) \cong k(G/H) \times k(G/K)$. This perspective will be required for the discussion of coinduction below.

\begin{example}
Let $R$ be a ring with $G$-action. The fixed points Tambara functor is the $G$-Tambara functor $\underline{R}$ with $\underline{R}(G/H) = R^H$. Noting that all restriction maps are inclusions, transfers and norms are uniquely defined as the appropriate sums (resp. products) of orbits via the double coset formula. The fixed point $G$-Tambara functor construction is functorial, and right adjoint to the functor $k \mapsto k(G/e)$ from $G$-Tambara functors to $G$-rings.
\end{example}

\begin{definition}[\cite{Nak11a}]
A nonzero $G$-Tambara functor $k$ is called field-like, or a $G$-Tambara field, if every nonzero morphism with domain $k$ is monic.
\end{definition}

By this definition, any field-like Tambara functor $k$ may be viewed as a subfunctor of the field-like Tambara functor $\underline{k(G/e)}$. This is because the adjunction unit $k \rightarrow \underline{k(G/e)}$ is nonzero, hence monic (hence injective in all levels). By this fact, to specify a field-like Tambara field, it is enough to specify a subring of each level of a Tambara field $\underline{R}$ which collectively are appropriately closed under taking transfers, norms, and restrictions.

\begin{proposition}[\cite{Nak11a}]
A $G$-Tambara functor $k$ is field-like if and only if all restriction maps are injective and $k(G/e)$ has no $G$-invariant ideals (recalling $W_e = G$).
\end{proposition}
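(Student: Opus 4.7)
The plan is to use the bijection, recalled in the introduction and due to Nakaoka, between Tambara ideals of $k$ and kernels of Tambara maps with source $k$. Under this correspondence, $k$ is field-like if and only if its only Tambara ideals are $0$ and $k$, so the proof proceeds by either constructing or excluding nontrivial Tambara ideals in each direction.

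For the implication starting from the hypotheses that all restrictions are injective and $k(G/e)$ has no proper nonzero $G$-invariant ideal, let $I$ be any Tambara ideal of $k$. Then $I(G/e)$ is an ideal of $k(G/e)$ stable under the Weyl group $W_e = G$, so by hypothesis $I(G/e) \in \{0, k(G/e)\}$. If $I(G/e) = 0$, then for every $H$ the injective restriction $R_H^e$ carries $I(G/H)$ into $0$, forcing $I(G/H) = 0$ and hence $I = 0$. If instead $I(G/e) = k(G/e)$, then $1 \in I(G/e)$, and multiplicativity of the norm together with $N_e^H(1) = 1$ gives $1 \in I(G/H)$ for every $H$, so $I = k$.

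Conversely, I argue the contrapositive by producing a nontrivial Tambara ideal from either failure hypothesis. If some restriction is not injective, then since restrictions compose as $R_H^e = R_K^e \circ R_H^K$ for $K \subseteq H$, some bottom-level restriction $R_H^e$ already has nonzero kernel; in this case I define $I(G/H) := \ker(R_H^e)$ at every level. If instead some proper nonzero $G$-invariant ideal $J \subseteq k(G/e)$ exists, I define $I(G/H) := (R_H^e)^{-1}(J)$. Closure of each such $I$ under restrictions is immediate from composition of restrictions, and closure under transfers, norms, and Nakaoka's norm-compatibility condition $N_H^K(a + I(G/H)) \subseteq N_H^K(a) + I(G/K)$ all reduce to the additive and multiplicative double coset formulas $R_K^e \circ T_H^K = \sum_{g \in K/H} c_g \circ R_H^e$ and $R_K^e \circ N_H^K = \prod_{g \in K/H} c_g \circ R_H^e$, combined (in the second case) with the $G$-invariance of $J$ and the fact that it is an ideal. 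The resulting Tambara ideal is nontrivial by construction, so $k$ is not field-like.

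The main technical obstacle is verifying Nakaoka's norm-compatibility for the two candidate ideals. After applying $R_K^e$, one expands $\prod_{g \in K/H}(c_g R_H^e(a) + c_g R_H^e(i))$ into $\prod_g c_g R_H^e(a)$ plus a sum of products, each containing at least one factor $c_g R_H^e(i)$, which lies in $J$ (or equals $0$); since $J$ is an ideal, the entire remainder lies in $J$, giving the required membership. All other Tambara ideal axioms follow from analogous but easier manipulations of the double coset formulas.
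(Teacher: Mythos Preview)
The paper does not supply its own proof of this proposition; it is quoted from Nakaoka \cite{Nak11a} (specifically Theorem~4.32 there) and used as a black box. Your argument is essentially the standard one and is correct.

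One point in the forward direction deserves tightening. You write that ``multiplicativity of the norm together with $N_e^H(1)=1$ gives $1\in I(G/H)$''. Norms are not required to preserve a Tambara ideal by definition; the relevant axiom is the norm-compatibility condition you invoke later, $N_e^H(a+i)-N_e^H(a)\in I(G/H)$. Applying this with $a=0$ and $i=1$ yields $1-N_e^H(0)\in I(G/H)$, so you also need $N_e^H(0)=0$. This does hold in any Tambara functor (the map $G/e\to G/H$ is surjective, and the norm along a surjection sends $0$ to $0$, e.g.\ via the exponential formula applied to $0=T_\emptyset(\ast)$), so your conclusion stands; but the justification is norm-compatibility together with $N(0)=0$, not multiplicativity alone.

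The converse direction is carefully argued and correct. It is worth noting that the identities $R_K^e\circ T_H^K=\sum_{g\in K/H}c_g\circ R_H^e$ and $R_K^e\circ N_H^K=\prod_{g\in K/H}c_g\circ R_H^e$ you rely on hold for arbitrary finite $G$, not only abelian $G$: restricting all the way down to $G/e$ collapses the Mackey double coset decomposition to a sum (resp.\ product) over $K/H$ of conjugates, since every intersection with $e$ is $e$. So your proof establishes the proposition in the generality in which it is stated.
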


Directly from this, we may prove the following result of David Chan and Ben Spitz. While this is an elementary consequence of the statement that $k(G/e)$ has no $G$-invariant ideals (in fact, it is equivalent to it), it greatly illuminates the structure of Tambara fields.

\begin{proposition}[\cite{CS24}]
\label{prop:Chan-Spitz}
	Let $k$ be a field-like $G$-Tambara functor. Then $k(G/e)$ is a product of copies of a field $\F$ permuted transitively by the $G$-action.
\end{proposition}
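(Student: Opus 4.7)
The plan is to translate the claim into pure commutative algebra by invoking the preceding Nakaoka characterization: $R := k(G/e)$ is a nonzero commutative ring with $G$-action whose only $G$-invariant ideals are $0$ and $R$. I then need to show that any such $R$ is a finite product of copies of one field, with $G$ permuting the factors transitively.

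The initial reductions are cheap. The nilradical is $G$-invariant, hence trivial, so $R$ is reduced. Next, pick any minimal prime $\mathfrak{p}$; its $G$-orbit $\{\mathfrak{p}_1,\dots,\mathfrak{p}_n\}$ is finite, and the intersection $\bigcap_i \mathfrak{p}_i$ is $G$-invariant and proper, hence zero. Any other minimal prime $\mathfrak{q}$ then contains the finite product $\prod_i \mathfrak{p}_i$, so primality places $\mathfrak{q} \supseteq \mathfrak{p}_i$ for some $i$, and minimality forces equality. Thus the minimal primes form a single $G$-orbit of size $n$.

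The decisive step is promoting non-zero-divisors to units. Given $x$ a non-zero-divisor, each translate $g(x)$ is also a non-zero-divisor (the action is by ring automorphisms), so their product $y := \prod_{g \in G} g(x)$ is nonzero; being $G$-invariant, $(y)$ is a nonzero $G$-invariant ideal, hence all of $R$, making $y$ — and consequently every factor $g(x)$, in particular $x$ — a unit. The prime-avoidance description of zero-divisors for reduced rings with finitely many minimal primes then identifies the set of zero-divisors with $\bigcup_i \mathfrak{p}_i$, so $R$ coincides with its own total ring of fractions. Chinese Remainder, applied to the maximal (hence pairwise coprime) ideals $S^{-1}\mathfrak{p}_i$ of that total ring whose intersection is zero, produces $R \cong \prod_i \mathrm{Frac}(R/\mathfrak{p}_i)$, a finite product of fields; the $G$-action permutes these factors transitively, identifying them all with a single field $\F$.

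I expect the unit-promotion step to be the only substantive obstacle: without some mechanism forcing ``enough'' global elements, one cannot upgrade the embedding $R \hookrightarrow \prod_i R/\mathfrak{p}_i$ to an isomorphism. The product-of-translates device is exactly the trick the hypothesis supplies, and it is the bare-hands shadow of the Tambara norm $N_e^G$ at level $G/e$.
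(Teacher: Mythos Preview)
Your proof is correct, but it takes a longer route than the paper's. The paper picks a \emph{maximal} ideal $m$ of $R = k(G/e)$, observes that its $G$-orbit $\{gm\}$ has $G$-invariant intersection (hence zero), and then applies the Chinese Remainder Theorem immediately---distinct maximal ideals are automatically comaximal, so there is nothing further to check. You instead start from a \emph{minimal} prime, and because distinct minimal primes need not be comaximal, you are forced to insert the extra ``unit promotion'' step (via the $G$-invariant product $\prod_g g(x)$) to show that the minimal primes are in fact maximal before CRT applies. Both arguments ultimately hinge on the same idea---a $G$-orbit of primes has $G$-invariant intersection, hence zero---but the paper's top-down choice of maximal ideals reaches the conclusion in about three lines. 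Your detour does buy something: it isolates the pleasant intermediate fact that $R$ equals its own total ring of fractions, and your multiplicative orbit-product trick is a nice ring-theoretic shadow of the Tambara norm, as you note.
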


\begin{proof}
	Let $m$ be a maximal ideal of $k(G/e)$, and consider the $G$-set $\{ gm | g \in G \}$. Since $G$ acts transitively, it is isomorphic to $G/H$ for some subgroup $H \subset G$. Now consider $\cap_{gH \in G/H} gm$. This is a $G$-invariant ideal, hence by \cite[Theorem 4.32]{Nak11a} it must be the zero ideal. Writing $\F = k(G/e)/gm$ (which does not depend on the choice of $g$), by the Chinese remainder theorem, $k(G/e) \cong \F^{|G/H|}$. Since $G$ acts transitively on the $gm$, it transitively permutes the factors in the product.
\end{proof}

This result suggests the following definition, with which we reinterpret Nakaoka's result.

\begin{definition}
A $G$-ring is field-like if it has no nontrivial $G$-invariant ideals. Equivalently, it is a product of fields permuted transitively by the $G$ action.
\end{definition}

\begin{proposition}
A Tambara functor $k$ is field-like if and only if all restriction maps are injective and $k(G/e)$ is a field-like $G$-ring.
\end{proposition}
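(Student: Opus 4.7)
The statement is essentially a direct repackaging of the Nakaoka proposition stated just above, once we confirm that the two descriptions in the new definition of a field-like $G$-ring actually coincide. So my plan is to verify that equivalence and then quote Nakaoka's result.

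First I would note that, assuming the two characterizations in the definition agree, the proposition follows immediately: Nakaoka's result already asserts that $k$ is field-like if and only if all restrictions are injective and $k(G/e)$ has no $G$-invariant ideals, and by the definition this last condition is precisely the assertion that $k(G/e)$ is a field-like $G$-ring. So the real content is to prove the equivalence stated inside the definition itself, namely that a $G$-ring $R$ has no nontrivial $G$-invariant ideals if and only if it is isomorphic, as a $G$-ring, to a finite product of copies of a field permuted transitively by $G$.

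For the forward direction I would simply reuse the argument already given in the proof of \Cref{prop:Chan-Spitz}: pick a maximal ideal $m \subset R$, form its $G$-orbit $\{gm\}$, and observe that the intersection $\bigcap_{g} gm$ is $G$-invariant, hence zero. The Chinese remainder theorem then identifies $R$ with $\F^{|G/H|}$ where $H$ is the stabilizer of $m$ and $\F = R/m$, and the $G$-action permutes the factors transitively by construction. The only point worth being careful about is that this Chan--Spitz argument was originally phrased inside a Tambara functor, but inspection shows it uses only the hypothesis ``no nontrivial $G$-invariant ideals in $R$,'' so it applies to any $G$-ring.

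For the reverse direction I would argue that in $R = \F^n$ with $G$ permuting factors transitively, any ideal is a product $I_1 \times \cdots \times I_n$ with each $I_j \in \{0,\F\}$, and $G$-invariance forces the set $\{j : I_j = 0\}$ to be a union of $G$-orbits on $\{1,\dots,n\}$; transitivity then forces it to be all or nothing, giving only the trivial ideals. I do not expect a real obstacle here; the main thing to watch is to phrase the equivalence in the definition cleanly enough that appealing to the preceding Nakaoka proposition genuinely closes out both directions of the proposition without further work.
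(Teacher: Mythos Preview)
Your proposal is correct and matches the paper's approach: the paper gives no explicit proof, treating the proposition as a direct restatement of Nakaoka's result once the definition of field-like $G$-ring is in hand. Your additional verification of the ``Equivalently'' clause in the definition (one direction via the Chan--Spitz argument, the other via the elementary ideal analysis in $\F^n$) is exactly what the paper leaves implicit.
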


Without knowing Proposition \ref{prop:Chan-Spitz} or \cite[Theorem 4.32]{Nak11a}, it is a priori possible for a field-like Tambara functor $k$ with $k(G/e) \cong \Z/n$ to exist for some composite integer $n$. Fortunately, there is an intrinsic notion of characteristic of a $G$-Tambara functor, which by Proposition \ref{prop:Chan-Spitz} may be identified with the usual possibilities for characteristic of a field.

\begin{definition}
The characteristic of a Tambara functor $k$ is the equivalence class determined by the following equivalence relation: $k \sim \ell$ if $k \boxtimes \ell \neq 0$.
\end{definition}

\begin{corollary}
The characteristic of $k$ may be identified with the characteristic of $k(G/e)$.
\end{corollary}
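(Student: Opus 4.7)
The plan is to use \cref{prop:Chan-Spitz} to interpret ``characteristic of $k(G/e)$'': for field-like $k$, the decomposition $k(G/e) \cong \F_k^{n_k}$ defines a characteristic $\mathrm{char}(\F_k)$, independent of the decomposition. The key observation is that the box product at the bottom level satisfies $(k \boxtimes \ell)(G/e) \cong k(G/e) \otimes_\Z \ell(G/e) \cong (\F_k \otimes_\Z \F_\ell)^{n_k n_\ell}$, and $\F_k \otimes_\Z \F_\ell$ is nonzero exactly when the two fields share characteristic --- matching characteristics produce a nonzero $\Q$- or $\F_p$-algebra, while mismatched characteristics force $1 \otimes 1 = 0$ by writing $1 = p \cdot p^{-1}$ in one factor and $p = 0$ in the other.

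It thus suffices to show that $k \boxtimes \ell \neq 0$ if and only if $(k \boxtimes \ell)(G/e) \neq 0$. The easy direction is immediate: a nonzero bottom level witnesses a nonzero Tambara functor. For the converse, I would show that any nonzero Tambara functor admits a surjection onto a field-like one: apply Zorn's lemma to the poset of proper Nakaoka ideals of $k \boxtimes \ell$ (chains have proper unions, since $1$ lies in no proper ideal), extract a maximal proper ideal, and use Nakaoka's correspondence between ideals and kernels to obtain a quotient $m$ with no nonzero proper quotients --- which must be field-like by definition. By \cref{prop:Chan-Spitz} applied to $m$, the ring $m(G/e)$ is nonzero, and since it arises as a quotient of $(k \boxtimes \ell)(G/e)$ via surjectivity of the quotient map at the bottom level, the latter cannot vanish. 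Combining this with the first paragraph identifies the equivalence class of $k$ under $\sim$ with the characteristic of $\F_k$.

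The main obstacle lies in this field-like-quotient step: one must verify that maximal proper Nakaoka ideals always exist (standard Zorn, using that ``$1$ generates the unit ideal'' passes through chains) and that the corresponding quotients are field-like, which is the definition of field-like as ``every nonzero morphism out is monic'' combined with Nakaoka's ideal/kernel correspondence. Once these foundational facts are in place, the remainder of the argument is just the elementary commutative algebra of tensor products of fields.
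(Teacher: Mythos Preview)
Your proof is correct and shares the paper's core strategy: invoke the formula $(k \boxtimes \ell)(G/e) \cong k(G/e) \otimes \ell(G/e)$ and analyze when a tensor product of finite products of fields vanishes. The paper's proof is a single sentence doing exactly this, leaving implicit the step you isolate as the main obstacle, namely that $k \boxtimes \ell \neq 0$ if and only if its bottom level is nonzero. Your Zorn's-lemma route (find a maximal Nakaoka ideal, quotient to obtain a field-like Tambara functor, apply \cref{prop:Chan-Spitz}) is valid and even produces the independently useful statement that every nonzero Tambara functor surjects onto a field-like one. For the purpose at hand, however, a much lighter argument suffices and is presumably what the paper takes for granted: in any Tambara functor the norm $N_e^H$ is unital and satisfies $N_e^H(0)=0$ (the latter via the exponential axiom, since the dependent product of $\emptyset \to G/e$ along the surjection $G/e \to G/H$ is empty), so $0=1$ at level $G/e$ forces $0=1$ at every level.
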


\begin{proof}
Use the formula $(k \boxtimes \ell)(G/e) \cong k(G/e) \otimes \ell(G/e)$ and the fact that $k(G/e)$ and $\ell(G/e)$ are finite products of fields.
\end{proof}

There is likely interesting combinatorial structure captured by the box-product of field-like Tambara functors, although a more serious investigation falls outside the scope of this paper.

Finally, we review the coinduction functor. Given $H \subset G$, the coinduction $\textrm{Coind}_H^G \ell$ of an $H$-Tambara $\ell$ to a $G$-Tambara functor is obtained by precomposition with the restriction functor from finite $G$-sets to finite $H$-sets. This requires us to view $\ell$ as a functor on all finite $G$-sets, rather than merely the transitive ones. For $G = C_{p^n}$ and $\ell$ a $C_{p^k}$-Tambara functor, we supply a pictoral description of the coinduction $\textrm{Coind}_k^n \ell$ below:
\begin{align*}
\left( \textrm{Coind}_k^n \ell \right) \left( C_{p^n}/C_{p^n} \right) & \cong \ell(C_{p^k}/C_{p^k}) \\
\left( \textrm{Coind}_k^n \ell \right) \left( C_{p^n}/C_{p^{n-1}} \right) & \cong \ell(C_{p^k}/C_{p^k})^{\times p} \\
 & \vdots \\
\left( \textrm{Coind}_k^n \ell \right) \left( C_{p^n}/C_{p^{k+1}} \right) & \cong \ell(C_{p^k}/C_{p^k})^{\times p^{n-k-1}} \\
\left( \textrm{Coind}_k^n \ell \right) \left( C_{p^n}/C_{p^{k}} \right) & \cong \ell(C_{p^k}/C_{p^k})^{\times p^{n-k}} \\
\left( \textrm{Coind}_k^n \ell \right) \left( C_{p^n}/C_{p^{k-1}} \right) & \cong \ell(C_{p^k}/C_{p^{k-1}})^{\times p^{n-k}} \\
 & \vdots \\
\left( \textrm{Coind}_k^n \ell \right) \left( C_{p^n}/e \right) & \cong \ell(C_{p^k}/e)^{\times p^{n-k}} \textrm{.} \\
\end{align*}

One immediately observes using \cite[Theorem 4.32]{Nak11a} that if $\ell$ is a field-like $H$-Tambara functor, then so is $\textrm{Coind}_H^G \ell$. Coinduction is right adjoint to the restriction functor $\textrm{Res}_H^G$, which is given levelwise by precomposition with the coinduction functor from $H$-sets to $G$-sets \cite{Str12}. Heuristically, one may view coinduction as ``preserving the top level" and restriction as ``preserving the bottom level". In particular, restriction does not in general preserve Tambara fields, although we have the following.

\begin{proposition}
Suppose $k$ is a clarified $G$-Tambara field. Then for any subgroup $H \subset G$, $\textrm{Res}_H^G k$ is a clarified $H$-Tambara field.
\end{proposition}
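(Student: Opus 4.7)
The plan is to unfold the definition of restriction and verify both the field-like and clarified conditions directly; no substantially new ideas are required beyond what is already recalled in the excerpt.

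First, I would identify the levels of $\textrm{Res}_H^G k$. Since $\textrm{Res}_H^G$ is given levelwise by precomposition with the functor $G \times_H (-)$ from finite $H$-sets to finite $G$-sets, evaluation on a transitive $H$-set yields
\[ (\textrm{Res}_H^G k)(H/K) \cong k(G \times_H H/K) \cong k(G/K) \]
for every subgroup $K \subseteq H$, because $G \times_H H/K \cong G/K$ as $G$-sets. In particular, $(\textrm{Res}_H^G k)(H/e) \cong k(G/e)$, which is a field by the assumption that $k$ is clarified. This immediately gives the clarified property for $\textrm{Res}_H^G k$: a field cannot be expressed as $\textrm{Fun}(H/K, R)$ for any proper subgroup $K \subsetneq H$, so the bottom level of $\textrm{Res}_H^G k$ is not of the separated form.

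Next I would verify field-likeness of $\textrm{Res}_H^G k$ using the Nakaoka criterion stated earlier in the recollections: injectivity of all restriction maps plus absence of $H$-invariant ideals in the bottom level. The latter is automatic since $k(G/e)$ is already a field, and so has no nonzero ideals at all. For the former, any morphism of $H$-sets $H/L \to H/K$ with $L \subseteq K \subseteq H$ is carried by $G \times_H (-)$ to the corresponding morphism of $G$-sets $G/L \to G/K$, and hence each restriction map of $\textrm{Res}_H^G k$ is literally a restriction map of $k$; these are injective by the field-likeness hypothesis on $k$. The combination of these two checks with the previous paragraph gives that $\textrm{Res}_H^G k$ is a clarified $H$-Tambara field.

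The only obstacle worth mentioning is careful bookkeeping: one must verify that restrictions (and in fact also the residual Weyl actions, transfers, and norms, though these are not needed for the two Nakaoka criteria) really do transport along the inclusion of the $H$-subgroup lattice into the $G$-subgroup lattice in the expected way. This is automatic from the functorial definition of $\textrm{Res}_H^G$ via precomposition with $G \times_H (-)$, so it does not introduce any genuine new content; the entire argument amounts to identifying $(\textrm{Res}_H^G k)(H/K)$ with $k(G/K)$ together with an application of the Nakaoka criterion.
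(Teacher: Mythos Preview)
Your proof is correct and complete. The paper actually states this proposition without proof, presumably regarding it as a routine unwinding of definitions; your argument is exactly the expected direct verification via Nakaoka's criterion, so there is nothing to compare.

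One small terminological point: the paper describes $\textrm{Res}_H^G$ on Tambara functors as precomposition with \emph{coinduction} of $H$-sets, whereas you use $G \times_H (-)$, i.e.\ induction. Your formula $(\textrm{Res}_H^G k)(H/K) \cong k(G/K)$ is the correct one (and is what makes $\textrm{Res}_H^G$ left adjoint to $\textrm{Coind}_H^G$ on Tambara functors), so either the paper is using ``coinduction'' loosely or there is a typo; in any case this does not affect the validity of your argument.
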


There is also a coinduction functor from $H$-rings to $G$-rings, which is right adjoint to the restriction functor. It is given by $R \mapsto Fun(G/H,R)$, which we abbreviate by $\textrm{Coind}_H^G R$.

\section{Separated Tambara fields}

In this section we aim to reduce the classification of field-like $C_{p^n}$-Tambara functors to those whose bottom level $C_{p^n}/e$ is a field. To describe Tambara fields of this form, we introduce the notion of a clarified Tambara functor.

\begin{definition}
Let $k$ be a field-like Tambara functor. If $k(G/e) \cong \textnormal{Coind}_H^G R$ for some ring $R$ and proper subgroup $H \subset G$, we call $k$ separated. Otherwise, we call $k$ clarified. 
\end{definition}

\begin{lemma}
\label{lem:coind-commutes-with-underline}
	Let $R$ an $H$-ring. Then $\textnormal{Coind}_H^G \underline{R} \cong \underline{ \textnormal{Coind}_H^G R }$.
\end{lemma}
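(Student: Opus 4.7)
The plan is to prove the lemma by recognizing both sides as right adjoints and invoking uniqueness of adjoints. Recall that $\underline{(-)}$ is right adjoint to evaluation at the bottom level $k \mapsto k(G/e)$, with an analogous adjunction at the $H$-level; separately, $\textnormal{Coind}_H^G$ is right adjoint to $\textnormal{Res}_H^G$ on both Tambara functors and $G$-rings.

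Composing these adjunctions, $\textnormal{Coind}_H^G \circ \underline{(-)}$ is right adjoint to the composite sending a $G$-Tambara functor $k$ to $(\textnormal{Res}_H^G k)(H/e)$, while $\underline{(-)} \circ \textnormal{Coind}_H^G$ is right adjoint to the composite $k \mapsto \textnormal{Res}_H^G(k(G/e))$. By uniqueness of right adjoints, it then suffices to identify these two composites of left adjoints as functors $G\text{-Tamb} \to H\text{-Ring}$. Unwinding the description of Tambara restriction as precomposition with the coinduction functor from $H$-sets to $G$-sets, which sends $H/e$ to $G/e$, yields $(\textnormal{Res}_H^G k)(H/e) = k(G/e)$ with $H$-action obtained by restricting the ambient $G$-action. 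This is precisely $\textnormal{Res}_H^G(k(G/e))$ as an $H$-ring, and the identification is visibly natural in $k$. Feeding the agreement through the adjoint correspondence yields the desired natural isomorphism $\textnormal{Coind}_H^G \underline{R} \cong \underline{\textnormal{Coind}_H^G R}$.

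The main obstacle I anticipate is purely bookkeeping: one must carefully verify that the $H$-action on $k(G/e)$ produced at the Tambara level coincides with the restriction of the $G$-ring structure, and check naturality of both composites in $k$. No nontrivial computation is required once the adjunctions are correctly set up.

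For a reader preferring a direct verification, one can argue levelwise instead. Using $(\textnormal{Coind}_H^G \underline{R})(G/K) = \underline{R}(\textnormal{Res}_H^G(G/K))$ together with product preservation and the decomposition of $\textnormal{Res}_H^G(G/K)$ into $H$-orbits indexed by double cosets gives $\prod_{HgK \in H\backslash G/K} R^{H \cap gKg^{-1}}$. On the other hand, the $K$-fixed points of $\textnormal{Coind}_H^G R = \mathrm{Fun}(G/H,R)$ decompose analogously over $K$-orbits on $G/H$, yielding the same product; one checks by inspection that this identification intertwines the restriction, transfer, and norm maps on both sides.
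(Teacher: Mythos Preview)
Your proposal is correct, and your primary argument via uniqueness of right adjoints is essentially identical to the paper's own proof, which likewise reduces to checking $(\textnormal{Res}_H^G k)(H/e) \cong \textnormal{Res}_H^G(k(G/e))$ by unwinding the definition of restriction on Tambara functors. The levelwise double-coset verification you sketch at the end is an additional alternative not present in the paper.
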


\begin{proof}
	Since coinduction is right adjoint to restriction and the fixed-point construction is right adjoint to the ``bottom-level" functor, it suffices to prove that the left adjoints commute, ie for all $G$-Tambara functors $k$, we have \[ \left( \textrm{Res}_H^G k \right) (H/e) \cong \textrm{Res}_H^G (k(G/e)) \textrm{.} \] Now the left-hand side is defined as $k \left( \textrm{Coind}_H^G (H/e) \right) \cong k(G/e)$ with $H$ acting through restriction of the $G$-action. This is precisely $\textrm{Res}_H^G (k(G/e))$, as desired.
\end{proof}

\begin{lemma}
Let $k$ a $G$-Tambara functor with $k(G/e) \cong \textnormal{Coind}_H^G R$ for some $H$-ring $R$, and suppose that the restriction map $k(G/H) \rightarrow k(G/e)$ is injective. Then we have an isomorphism $k(G/H) \cong \textnormal{Coind}_H^G S$ of rings for some subring $S \subset R$.
\end{lemma}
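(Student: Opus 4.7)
The plan is to lift, for each coset $gH \in G/H$, the factor idempotent $e_{gH} \in k(G/e) \cong R^{[G:H]}$ (corresponding to the product decomposition of $\textnormal{Coind}_H^G R$) to an idempotent $\tilde e_{gH} \in k(G/H)$, and then deduce the ring-theoretic statement. Under the identification $\textnormal{Coind}_H^G R = \Hom_H(G,R)$, the idempotent $e_{gH}$ is the characteristic function of the coset $gH$; when $H$ is normal in $G$ (as it is for the paper's setting $G = C_{p^n}$), the $H$-action on $\textnormal{Coind}_H^G R$ preserves the product decomposition and fixes each $e_{gH}$. The multiplicative double coset formula applied to the norm $N_H^e : k(G/e) \to k(G/H)$ then yields
\[
R_H^e\,N_H^e(e_{gH}) \;=\; \prod_{h \in H} h \cdot e_{gH} \;=\; e_{gH}^{|H|} \;=\; e_{gH},
\]
so that $\tilde e_{gH} := N_H^e(e_{gH})$ is a preimage of $e_{gH}$ under restriction. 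By multiplicativity of the norm together with injectivity of restriction, the $\tilde e_{gH}$ are pairwise orthogonal idempotents summing to $1$ in $k(G/H)$, giving a ring decomposition $k(G/H) \cong \prod_{gH \in G/H} \tilde e_{gH}\,k(G/H)$.

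Next, since restriction is $W_H$-equivariant and the $W_H = G/H$-action on $k(G/e)$ permutes the $e_{gH}$ transitively, it likewise permutes the $\tilde e_{gH}$ transitively on $k(G/H)$; all factors $\tilde e_{gH}\,k(G/H)$ are therefore isomorphic as rings. I would then define $S := \tilde e_{eH}\,k(G/H)$, viewed as a subring of $R$ via the projection $e_{eH}\,k(G/e) \xrightarrow{\sim} R$; since the image of $k(G/H)$ lies in the $H$-fixed subring $(k(G/e))^H = (R^H)^{[G:H]}$, one has $S \subset R^H$. The $W_H$-action on $(R^H)^{[G:H]}$ permutes the factors without twist (any twist by an element of $H$ acts trivially on $R^H$), so the image of $k(G/H)$ in $k(G/e)$ is precisely the subring $S^{[G:H]} = \textnormal{Coind}_H^G S$, and injectivity of restriction yields $k(G/H) \cong \textnormal{Coind}_H^G S$.

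The main obstacle is recognizing the norm as the correct tool for lifting the idempotents: the multiplicative double coset formula absorbs the ambient power through $e_{gH}^{|H|} = e_{gH}$, whereas the additive transfer would only produce $|H|\cdot e_{gH}$, which recovers $e_{gH}$ only when $|H|$ is invertible in $R$. Once the lifts $\tilde e_{gH}$ are in hand, the remaining steps are bookkeeping: transitivity of the Weyl action on the idempotents, and the identification of the factor subrings with a common $S \subset R^H \subset R$.
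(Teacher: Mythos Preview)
Your proposal is correct and follows essentially the same approach as the paper: both lift the factor idempotents via the norm $N_H^e$, use the multiplicative double coset formula to see that restriction returns each idempotent to itself, and then invoke injectivity of restriction and multiplicativity of the norm to conclude that the lifts form a complete orthogonal system. Your write-up is in fact more careful than the paper's---you make explicit the normality hypothesis on $H$ (implicit in the paper's standing abelian assumption), spell out the identification of $S$ via the Weyl action, and note why the transfer would not suffice---but the core argument is the same.
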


\begin{proof}
	Let $\{ x_{gH} \}$ denote the set of idempotents corresponding to projection on the each factor $R$ in level $G/e$. Note that this set is isomorphic to $G/H$. The double coset formula implies that the composition of the norm map from the bottom level $G/e$ to level $G/H$ with the restriction map to the bottom level sends each $x_{gH}$ to itself (the product over the $H$-orbits). By multiplicativity of the norm and injectivity of restriction, we see that the norms of the $x_{gH}$ form a complete set of orthogonal idempotents, which induce the desired isomorphism.
\end{proof}

\begin{corollary}
\label{cor:bottom-half-looks-coinduced}
Suppose $k$ is a field-like $G$-Tambara functor and $k(G/e) \cong \textnormal{Coind}_H^G \F$ for some $H$-field $\F$. Then whenever $L \subset H$, $k(G/L) \cong \textnormal{Coind}_H^G R$ for some subring $R$ of $\F$.
\end{corollary}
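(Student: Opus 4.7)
The plan is to produce orthogonal idempotents in $k(G/L)$ that decompose it as a product indexed by $G/H$, imitating the previous lemma, and then to use the $W_L = G/L$-action on $k(G/L)$ to identify all factors in this product with a single $H$-stable subring of $\F$.

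For the first step, for each coset $gH \in G/H$, I would set $e^L_{gH} := N_L^e(x_{gH}) \in k(G/L)$ (equivalently, the restriction along $R_L^H$ of the idempotent $N_H^e(x_{gH})$ produced in the previous lemma). The double coset formula gives
\[
R_e^L\bigl( e^L_{gH} \bigr) = \prod_{l \in L} l \cdot x_{gH},
\]
and since $L \subset H$ and $G$ is abelian, each $l \in L$ stabilizes the coset $gH$, so the product collapses to the idempotent $x_{gH}$. Multiplicativity of $R_e^L$ together with its injectivity (which holds since $k$ is field-like) then forces the $e^L_{gH}$ to be a complete system of orthogonal idempotents, yielding a ring decomposition $k(G/L) \cong \prod_{gH \in G/H} R_{gH}$ with each $R_{gH} := k(G/L) \cdot e^L_{gH}$ embedded as a subring of the $gH$-factor $\F$ of $k(G/e)$.

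Next I would show that all the $R_{gH}$ coincide with a common $H$-stable subring $R \subset \F$. Fix a generator $g_0$ of $G$, take coset representatives $g_i := g_0^{i-1}$ for $G/H$, and set $n := |G/H|$ and $h_0 := g_0^n \in H$. Under the resulting identification $k(G/e) \cong \F^n$, the $G$-action becomes the twisted cyclic shift
\[
(a_1, \ldots, a_n) \longmapsto (a_2, \ldots, a_n, h_0 \cdot a_1),
\]
and this action is $W_L$-equivariant along the injective restriction $k(G/L) \hookrightarrow k(G/e)$. Applying the shift to tuples supported in a single coordinate produces the chain of containments $R_1 \supset R_2 \supset \cdots \supset R_n \supset h_0 \cdot R_1$; iterating around the cycle (equivalently, applying $g_0^{-n}$) yields $h_0 \cdot R_1 = R_1$, forcing all $R_i$ to coincide with a common $H$-stable subring $R$ of $\F$. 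Hence $k(G/L) \cong R^n \cong \textnormal{Coind}_H^G R$ as $G$-rings.

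The chief obstacle, as I see it, is doing the bookkeeping for the twisted-permutation description of the $G$-action on $\textnormal{Coind}_H^G \F$ carefully enough to extract the equality (and not just isomorphism) of the subrings $R_{gH}$: a single application of $g_0$ yields only a chain of containments, and one needs the $H$-twist $h_0 = g_0^n$ appearing when the cycle closes to promote this chain to a collection of equalities, which in turn yields both $H$-stability of $R$ and the coinduction identification.
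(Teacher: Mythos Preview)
Your argument is correct and in fact more complete than the paper's. The paper's proof is a single sentence: since the restriction $k(G/H)\to k(G/e)$ factors through $k(G/L)$, the complete set of orthogonal idempotents produced in the preceding lemma already lives in $k(G/L)$, giving the product decomposition. Your first step recovers exactly this (you norm $x_{gH}$ directly to level $L$ rather than restricting the level-$H$ idempotent, and you correctly note these agree).

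Where you go further is in your second step. The paper's proof stops at the product decomposition and does not explain why all of the factors $R_{gH}\subset\F$ coincide as subrings, even though this is what is used downstream in Theorem~\ref{thm:field-is-coinduced}. Your analysis of the twisted cyclic shift on $\F^{n}$, extracting the chain $R_1\supset\cdots\supset R_n\supset h_0\cdot R_1$ and then closing it using that $g_0$ acts by an automorphism, is a clean way to establish both that all $R_i$ are equal and that the common subring is $H$-stable. One small remark: the cleanest way to upgrade the chain of containments to equalities is simply to note that $g_0$ acts by a ring automorphism of $k(G/L)$, so each containment it produces is automatically an equality; your phrasing ``iterating around the cycle'' amounts to the same thing but is slightly roundabout.

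The only caveat is that your second step uses a generator of $G$, so it is specific to cyclic $G$. Since the paper's ambient setting is $G=C_{p^n}$ (and the double coset formula being invoked is already the abelian one), this is not a defect for the intended application.
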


\begin{proof}
The restriction map $k(G/H) \rightarrow k(G/e)$ factors through $k(G/L)$, hence the sub-$G$-set of idempotents of $k(G/H)$ isomorphic to $G/H$ is also contained in $k(G/L)$
\end{proof}

\begin{lemma}
\label{lem:top-half-looks-coinduced}
	Suppose $G$ is abelian, $k$ is any $G$-Tambara functor such that $k(G/H)$ is a product of copies of some ring $R$ permuted freely and transitively by the Weyl group $G/H$, and $L$ is a subgroup of $G$ containing $H$ such that the restriction $k(G/L) \rightarrow k(G/H)$ is injective. Then the restriction map has image $k(G/H)^L$.
\end{lemma}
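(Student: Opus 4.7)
The plan is to prove surjectivity of $R_L^H$ onto $k(G/H)^L$ by producing explicit preimages via the additive transfer and the double coset formula; the containment $\operatorname{Im}(R_L^H) \subseteq k(G/H)^L$ is immediate from $G$-equivariance of restriction, since $L$ acts trivially on $k(G/L)$. To set up the target, note that since $H \subset L$, the quotient $L/H$ sits inside the Weyl group $W_H = G/H$ and acts on $k(G/H) \cong R^{G/H}$ by freely permuting the factor idempotents $\{e_{g'H}\}_{g'H \in G/H}$. The $L/H$-orbits on $G/H$ are the cosets making up $G/L$, so
\[ k(G/H)^L \cong R^{G/L}, \]
with a complete orthogonal family of idempotents $e_{gL} := \sum_{g'H \in gL/H} e_{g'H}$.

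The key step is a direct double-coset calculation. Let $\iota_H : R \hookrightarrow k(G/H)$ denote the embedding into the $H$-coordinate, so $\iota_H(r) = r \cdot e_H$. The double coset formula $R_L^H T_L^H = \sum_{g \in L/H} c_g$ gives
\[ R_L^H\bigl(T_L^H \iota_H(r)\bigr) \;=\; \sum_{g \in L/H} c_g(\iota_H(r)) \;=\; \sum_{g \in L/H} r \cdot e_{gH} \;=\; r \cdot e_L, \]
so $r \cdot e_L$ lies in the image of $R_L^H$ for every $r \in R$. To reach other orbits, I would translate by the Weyl action: $G$-equivariance of restriction yields, for each $g \in G$,
\[ R_L^H\bigl(c_g(T_L^H \iota_H(r))\bigr) = c_g(r \cdot e_L) = r \cdot e_{gL}. \]
Thus every $r \cdot e_{gL}$ is in the image. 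A general element of $k(G/H)^L \cong R^{G/L}$ is the finite sum $\sum_{gL \in G/L} r_{gL} \cdot e_{gL}$, and by additivity lies in the image.

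The main obstacle is bookkeeping rather than anything conceptual: one must track how the free, transitive Weyl action permutes the factor idempotents so that the orbit sums $e_{gL}$ constitute a complete set of orthogonal idempotents for $k(G/H)^L$, and apply the double coset formula in the correct orientation (restriction-after-transfer of the additive transfer). It is worth noting that injectivity of $R_L^H$ is not actually needed to compute the image; it is the additional input that would upgrade this to a ring isomorphism $k(G/L) \cong k(G/H)^L$.
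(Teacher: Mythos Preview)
Your argument is correct and follows essentially the same route as the paper: apply the double coset formula to $T_L^H(r\cdot e_{gH})$ to produce $r\cdot e_{gL}$ in the image of restriction, then sum over cosets. The only cosmetic difference is that the paper varies the starting idempotent $x$ directly, whereas you fix $e_H$ and translate by the Weyl action; you are also slightly more careful in recording the easy containment $\operatorname{Im}(R_L^H)\subseteq k(G/H)^L$ and in observing that injectivity is not used to compute the image.
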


\begin{proof}
	Choose an idempotent $x \in k(G/H)$ corresponding to projection onto a factor $R$ and choose $r \in R$ arbitrary. The double coset formula implies that transferring $rx$ up to $k(G/L)$ and restricting the resulting element down to $k(G/H)$ results in \[ r \left( \Sigma_{g \in L/H} gx \right) \textrm{.} \] Repeating this process through all choices of $x$ and $r \in R$, we observe that the image of the restriction contains a collection of copies of $R$, embedded in $k(G/H)$ via the diagonal embedding $R \rightarrow R^{\times L/H}$ followed by any of the $|G/L|$ inclusions $R^{\times L/H} \hookrightarrow R^{\times G/H}$. Therefore the subring generated by the image is precisely the $L$-fixed points of $R^{\times G/H}$.
\end{proof}

The previous two lemmas show that any field-like $G$-Tambara functor ``looks like" a coinduced one in any level $G/L$ such that $L$ either contains or is contained in some fixed subgroup $H \subset G$. So, we can only deduce that field-like Tambara functors are always coinduced for families of abelian groups for which the subgroup lattice is a well-ordered set. This is why we only obtain a classification of fields for groups $C_{p^n}$. The author expects the following result to be true for abelian groups, and possibly even arbitrary finite groups, and intends to study this in forthcoming work.

\begin{theorem}
\label{thm:field-is-coinduced}
	If $k$ is a field-like $C_{p^n}$-Tambara functor, then $k \cong \textnormal{Coind}_s^n \ell$ for some clarified $C_{p^s}$-Tambara functor $\ell$.
\end{theorem}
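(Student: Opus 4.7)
The plan is to use Proposition \ref{prop:Chan-Spitz} together with the structural lemmas already in the excerpt. By Chan-Spitz, $k(C_{p^n}/e) \cong \F^{\times p^{n-s}}$ for some field $\F$, where the $C_{p^n}$-action is transitive on factors with stabilizer a unique subgroup $C_{p^s}$ (uniqueness follows because the subgroup lattice of $C_{p^n}$ is totally ordered). If $s = n$, then $k$ is already clarified and we take $\ell := k$ with $\textnormal{Coind}_n^n = \id$. Assume $s < n$. Applying Corollary \ref{cor:bottom-half-looks-coinduced} to $H = C_{p^s}$ yields $k(C_{p^n}/C_{p^i}) \cong R_i^{\times p^{n-s}}$ for each $0 \leq i \leq s$, with $R_i$ a subring of $\F$ and $R_0 = \F$. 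Applying Lemma \ref{lem:top-half-looks-coinduced} to $H = C_{p^s}, L = C_{p^i}$ identifies $k(C_{p^n}/C_{p^i})$ with $R_s^{\times p^{n-i}}$ for each $s \leq i \leq n$. Level-by-level these rings already match the levels of $\textnormal{Coind}_s^n \ell$ as given in the pictoral formula, for the candidate $C_{p^s}$-Tambara functor $\ell$ with $\ell(C_{p^s}/C_{p^i}) := R_i$.

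To construct $\ell$ and its structure maps, I pass to $\textrm{Res}_s^n k$ and use the orthogonal idempotents $e_1, \ldots, e_{p^{n-s}}$ of $k(C_{p^n}/e)$ to produce a product decomposition. These idempotents lift consistently to level $C_{p^n}/C_{p^i}$ for every $i \leq s$ via the norm, and they are preserved by all restrictions in $k$ componentwise since restrictions respect the $\textnormal{Coind}$-form of the levels. The crucial check is that transfers and norms in $\textrm{Res}_s^n k$ also respect the decomposition: for $0 \leq j < i \leq s$ and any factor idempotent $e_k$, the double coset formula gives
\[
R^i_j N^i_j e_k = \prod_{g \in C_{p^i}/C_{p^j}} g\cdot e_k = e_k^{p^{i-j}} = e_k \qquad\text{and}\qquad R^i_j T^i_j e_k = p^{i-j} e_k\text{,}
\]
using that $C_{p^i} \subseteq C_{p^s}$ fixes $e_k$. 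Since restriction is injective (field-likeness), $N^i_j e_k = e_k$, while $T^i_j e_k$ is forced to be the $R_i$-component of the corresponding projection. Hence the $e_k$ are ``Tambara idempotents'' in $\textrm{Res}_s^n k$, giving a product decomposition $\textrm{Res}_s^n k \cong \prod_{k=1}^{p^{n-s}} \ell_k$ into $C_{p^s}$-Tambara functors, each factor isomorphic to the $\ell$ defined above.

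Finally, $\ell$ is clarified since $\ell(C_{p^s}/e) = R_0 = \F$ is a single field, and it is field-like because all its restriction maps are inherited inclusions $R_i \subseteq R_j$. To get the isomorphism $k \cong \textnormal{Coind}_s^n \ell$, the projection $\textrm{Res}_s^n k \twoheadrightarrow \ell$ onto one factor is a map of $C_{p^s}$-Tambara functors, which by the restriction-coinduction adjunction corresponds to a map $\varphi : k \to \textnormal{Coind}_s^n \ell$. On each level $\varphi$ is the identification $R_i^{\times p^{n-s}} = \ell(C_{p^s}/C_{p^i})^{\times p^{n-s}}$ (for $i \leq s$) or $R_s^{\times p^{n-i}} = \ell(C_{p^s}/C_{p^s})^{\times p^{n-i}}$ (for $i \geq s$), which are ring isomorphisms matching the pictoral formula for $\textnormal{Coind}_s^n$. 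Thus $\varphi$ is a levelwise ring isomorphism, hence an isomorphism of Tambara functors.

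The main obstacle I expect is the double-coset bookkeeping in the middle paragraph — verifying that every norm and transfer of $k$ between the levels $C_{p^n}/C_{p^i}$ really does split along the idempotent decomposition and matches the $\textnormal{Coind}_s^n$-construction. The computation is forced by the double coset formula together with injectivity of restriction, but requires treating the three regimes $i, j \leq s$, $i, j \geq s$, and $j \leq s \leq i$ separately and keeping careful track of how the $C_{p^n}/C_{p^s}$-set of idempotents interacts with the Weyl actions.
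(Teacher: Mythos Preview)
Your proposal is correct and follows essentially the same route as the paper: Chan--Spitz to identify the bottom level, Corollary~\ref{cor:bottom-half-looks-coinduced} for levels $i\le s$, Lemma~\ref{lem:top-half-looks-coinduced} for levels $i\ge s$, and the construction of $\ell$ via the idempotent decomposition of $\textrm{Res}_s^n k$ (which is exactly the paper's ``alternative construction'' of $\ell$ as the $C_{p^n}/C_{p^s}$-fixed subfunctor). The only cosmetic difference is how the final isomorphism is produced: you transport the projection $\textrm{Res}_s^n k \twoheadrightarrow \ell$ through the restriction--coinduction adjunction, whereas the paper embeds both $k$ and $\textrm{Coind}_s^n\ell$ into $\textrm{Coind}_s^n\underline{\F}$ and compares images there; both reduce to the same levelwise check against Lemma~\ref{lem:top-half-looks-coinduced}.
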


\begin{proof}
	By Proposition \ref{prop:Chan-Spitz}, $k(C_p/e) \cong \textrm{Coind}_s^n \F$ for some $C_{p^s}$-field $\F$. Composing the canonical map to the fixed point Tambara functor of the $C_{p^n}/e$ level with the isomorphism of Lemma \ref{lem:coind-commutes-with-underline} supplies a map $k \rightarrow \textrm{Coind}_s^n \underline{\F}$ which is manifestly an isomorphism in level $C_{p^n}/e$.
	
	As rings, set $\ell(C_{p^s}/C_{p^j})$ to be the subring $R_j$ of $\F$ appearing in Corollary \ref{cor:bottom-half-looks-coinduced}, and identify $k(C_{p^n}/C_{p^j})$ with $\textrm{Coind}_s^n \ell(C_{p^s}/C_{p^j})$. The $C_{p^s}$-equivariant restriction maps for $\ell$ are obtained as the restriction of the restriction maps for $k$ to the $eC_{p^s}$ factor (the proof of Corollary \ref{cor:bottom-half-looks-coinduced} shows that this is well-defined). The norm and transfer maps are defined similarly, observing that the double coset formula along with injectivity of the restriction maps imply that the restriction of the norm (resp. transfer) in $k(C_{p^n}/C_{p^j})$ to the $eC_{p^s}$ factor lands in the $eC_{p^s}$ factor, for $j \leq s$. The exponential and double coset formulae for $k$ then become the double coset formulae for $\ell$. 
	
	We may alternatively construct $\ell$ as follows. Note that $\textrm{Res}_s^n k$ has an action of $C_{p^n}/C_{p^s}$ arising from the free and transitive permutation of the $C_{p^s}$-orbits of the $C_{p^s}$-sets \[ \textrm{Res}_s^n \left( \textrm{Coind}_s^n C_{p^s}/C_{p^k} \right) \] which corresponds in each level to permuting the $|C_{p^n}/C_{p^s}|$ factors $\ell(C_{p^s}/C_{p^j})$ of $k(C_{p^n}/C_{p^j})$. We define $\ell$ as the subfunctor of $\textrm{Res}_s^n k$ formed by the $C_{p^n}/C_{p^s}$-fixed points of this action.
	
	Now $\ell$ is a clarified field-like $C_{p^s}$-Tambara functor, and we may coinduce the canonical map \[ \ell \rightarrow \underline{\ell(C_{p^s}/e)} \] to \[ \textrm{Coind}_s^n \ell \rightarrow \textrm{Coind}_s^n \underline{\ell(C_{p^s}/e)} \cong \textrm{Coind}_s^n \underline{\F} \] Finally, the image of \[ k(C_{p^n}/C_{p^i}) \rightarrow \textrm{Coind}_s^n \underline{\F} \left(C_{p^n}/C_{p^i} \right) \] is precisely the image of $\textrm{Coind}_s^n \ell \left(C_{p^n}/C_{p^i} \right)$; when $i \leq k$ this is by construction of $\ell$, and when $i \geq k$ this is by Lemma \ref{lem:top-half-looks-coinduced}. Since $k$ and $\textrm{Coind}_s^n \ell$ are both field-like, they are naturally isomorphic to their images in $\textrm{Coind}_s^n \underline{\F}$, hence to each other.
\end{proof}

The author thanks Jason Schuchardt for pointing out that the following result (which in an earlier draft immediately preceeded Proposition \ref{prop:clarified-fields-with-perfect-fixed-points}) is true not just for clarified fields but for all fields, using the same argument.

\begin{proposition}
\label{prop:fields-of-char-not-p}
Suppose $k$ is a $G$-Tambara functor such that $|G|$ is invertible in the field $k(G/G)$. Then the canonical map $k \rightarrow \underline{k(G/e)}$ is an isomorphism.
\end{proposition}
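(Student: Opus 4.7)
The plan is to show the counit $k \to \underline{k(G/e)}$ is a levelwise isomorphism. At level $G/L$ this map is the restriction $R^e_L : k(G/L) \to k(G/e)^L$ (the image lies in $L$-fixed points since $L$ acts trivially on $k(G/L)$ and restriction is $G$-equivariant). Injectivity of $R^e_L$ is automatic once we know $k$ is field-like, by Nakaoka's criterion that restrictions of field-like Tambara functors are injective, so the entire content of the proof lies in surjectivity.

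First I would check that $|L|$ is invertible in $k(G/L)$ for every subgroup $L \subset G$, not just in the top-level field $k(G/G)$: the unital ring map $k(G/G) \to k(G/L)$ sends the invertible element $|G|$ to an invertible element of $k(G/L)$, and since $|L|$ divides $|G|$, $|L|$ is invertible there as well. For surjectivity, given $x \in k(G/e)^L$, I would form the averaged transfer $\sigma_L(x) := \frac{1}{|L|} T^e_L(x) \in k(G/L)$; by the double coset formula $R^e_L T^e_L(x) = \sum_{g \in L} c_g(x) = |L|x$, where the last equality uses that $x$ is $L$-fixed, so $R^e_L \sigma_L(x) = x$. Combined with injectivity, this gives a levelwise ring isomorphism, which assembles into an isomorphism of $G$-Tambara functors because the canonical map is the counit of the adjunction between the bottom-level functor and the fixed-point functor, so it automatically commutes with all restrictions, norms, and transfers.

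The proof is quite direct. The only subtle point is ensuring $|L|$ is invertible in $k(G/L)$ rather than merely in $k(G/G)$; this is immediate from the unital ring structure of $k$, and I do not expect any serious obstacle.
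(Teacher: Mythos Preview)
Your proposal is correct and follows essentially the same line as the paper: both arguments use the double coset formula $R^e_L T^e_L = \sum_{g \in L} c_g$ to see that, on $L$-fixed elements, $R^e_L \circ T^e_L$ is multiplication by $|L|$, and then invoke invertibility of $|L|$ together with Nakaoka's injectivity criterion. The only cosmetic difference is that you divide by $|L|$ upstairs in $k(G/L)$ (hence your detour through invertibility in $k(G/L)$), whereas the paper simply observes that $|L|x$ lies in the image of $R^e_L$ and divides by $|L|$ downstairs in $k(G/e)$; also, the canonical map $k \to \underline{k(G/e)}$ is the \emph{unit} of the adjunction, not the counit.
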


\begin{proof}
Consider the restriction of the transfer map $k(G/e) \rightarrow k(G/H)$ to the $H$-fixed points. The double coset formula implies that postcomposing this map with the restriction $k(G/H) \rightarrow k(G/e)$ is multiplication by $|H|$, which is a unit in $k(G/e)$ by assumption. Therefore the restriction has image $k(G/e)^{H}$. Since it is injective by Nakaoka's theorem, it is an isomorphism $k(G/H) \cong k(G/e)^H$. This is precisely the statement that $k \rightarrow \underline{k(G/e)}$ is an isomorphism.
\end{proof}

\begin{corollary}
\label{cor:char-coprime-to-group-order-implies-coinduced}
Under the hypothesis of Proposition \ref{prop:fields-of-char-not-p}, $k \cong \textnormal{Coind}_H^G \ell$ for some clarified $H$-Tambara field $\ell$.
\end{corollary}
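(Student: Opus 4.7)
The plan is to combine Proposition \ref{prop:fields-of-char-not-p}, Proposition \ref{prop:Chan-Spitz}, and Lemma \ref{lem:coind-commutes-with-underline} in sequence. First, by Proposition \ref{prop:fields-of-char-not-p} the hypothesis that $|G|$ is invertible in $k(G/G)$ forces the canonical map $k \to \underline{k(G/e)}$ to be an isomorphism, so it suffices to realize $\underline{k(G/e)}$ as a coinduction from a clarified $H$-Tambara field.

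Next, I would invoke Proposition \ref{prop:Chan-Spitz} to identify the bottom level: as a $G$-ring we have $k(G/e) \cong \F^{|G/H|}$ where $\F$ is a field and $H \subset G$ is the stabilizer of one of the factors. This is precisely a description of $k(G/e)$ as $\textrm{Coind}_H^G \F$, viewing $\F$ as an $H$-ring via the residual Weyl-type action.

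Combining these two observations, $k \cong \underline{\textrm{Coind}_H^G \F}$. Applying Lemma \ref{lem:coind-commutes-with-underline} then gives
\[ k \cong \underline{\textrm{Coind}_H^G \F} \cong \textrm{Coind}_H^G \underline{\F}\textrm{,} \]
so we take $\ell = \underline{\F}$.

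It remains to check that $\ell = \underline{\F}$ is a clarified $H$-Tambara field. It is field-like because $\F$ is a field (hence has no $H$-invariant ideals beyond $0$ and $\F$) and the fixed-point construction yields restriction maps which are honest inclusions, so Nakaoka's criterion applies. It is clarified because $\ell(H/e) = \F$ is a single field rather than a nontrivial product, so it cannot be of the form $\textrm{Coind}_{H'}^H R$ for a proper $H' \subset H$. There is no real obstacle here: once the three prior results are lined up, the argument is a short composition, and the only thing to be slightly careful about is the edge case $H = G$, in which case $\textrm{Coind}_G^G \ell = \ell$ and $k$ itself is already clarified, so the conclusion holds trivially.
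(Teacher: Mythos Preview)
Your proof is correct and follows essentially the same route as the paper's proof, which simply reads ``Combine Proposition \ref{prop:fields-of-char-not-p} with Lemma \ref{lem:coind-commutes-with-underline}.'' You have merely made explicit the intermediate step (invoking Proposition \ref{prop:Chan-Spitz} to write $k(G/e) \cong \textrm{Coind}_H^G \F$) and the verification that $\underline{\F}$ is clarified, both of which the paper leaves implicit.
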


\begin{proof}
Combine Proposition \ref{prop:fields-of-char-not-p} with Lemma \ref{lem:coind-commutes-with-underline}.
\end{proof}

\begin{corollary}
The category of field-like $G$-Tambara functors of characteristic not dividing $|G|$ is adjointly equivalent to the category of field-like $G$-rings of characteristic not dividing the order of $G$.
\end{corollary}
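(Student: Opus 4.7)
The plan is to restrict the adjunction $(-)(G/e) \dashv \underline{(-)}$ between $G$-Tambara functors and $G$-rings (recalled in the example on fixed-point Tambara functors) to the two subcategories at hand and then check that its unit and counit become isomorphisms there.

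First I would verify that each functor carries its source into the claimed target. Given a field-like $G$-Tambara functor $k$ with $|G|$ a unit in $k(G/G)$, Proposition \ref{prop:Chan-Spitz} shows that $k(G/e)$ is a field-like $G$-ring, while the injective restriction $k(G/G) \hookrightarrow k(G/e)$ forces the characteristic of $k(G/e)$ to agree with that of $k(G/G)$, hence also to not divide $|G|$. Conversely, given a field-like $G$-ring $R$ with $|G|$ invertible in $R^G$, the Tambara functor $\underline{R}$ has injective restriction maps (they are inclusions of fixed-point subrings) and $\underline{R}(G/e) = R$ has no nontrivial $G$-invariant ideals by hypothesis, so $\underline{R}$ is field-like by Nakaoka's criterion; the characteristic condition is inherited since $\underline{R}(G/G) = R^G$.

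Next I would inspect the unit and counit of the ambient adjunction. The counit $\underline{R}(G/e) \to R$ is literally the identity map on $R$, since $\underline{R}(G/e) = R^e = R$, and so is always an isomorphism. The unit $k \to \underline{k(G/e)}$ is precisely the canonical comparison map whose being an isomorphism is the content of Proposition \ref{prop:fields-of-char-not-p}. Together these give the desired adjoint equivalence.

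There is no substantive obstacle here; all the analytic content is packaged into Propositions \ref{prop:Chan-Spitz} and \ref{prop:fields-of-char-not-p}, and the remaining work is the bookkeeping verification that the original adjunction restricts cleanly to the subcategories under consideration.
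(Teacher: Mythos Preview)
Your proposal is correct and follows the same approach as the paper: the paper's proof is the single sentence ``The functor $R \mapsto \underline{R}$ is an inverse adjoint equivalence to the functor $k \mapsto k(G/e)$,'' and you have simply supplied the verification that both functors land in the claimed subcategories and that the unit and counit restrict to isomorphisms (the latter via Proposition~\ref{prop:fields-of-char-not-p}).
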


\begin{proof}
The functor $R \mapsto \underline{R}$ is an inverse adjoint equivalence to the functor $k \mapsto k(G/e)$.
\end{proof}

\begin{corollary}
Let $k$ be a field-like $G$-Tambara functor of characteristic not dividing $|G|$. Then any morphism of field-like $G$-Tambara functors $\ell \rightarrow k$ which is an isomorphism on the bottom level $G/e$ is an isomorphism of Tambara functors.
\end{corollary}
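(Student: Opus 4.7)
The plan is to reduce everything to the bottom level using Proposition \ref{prop:fields-of-char-not-p}. First I would observe that since the morphism $\ell \to k$ is an isomorphism on $G/e$, we have $\ell(G/e) \cong k(G/e)$ as $G$-rings, so $\ell(G/e)$ also has characteristic not dividing $|G|$. A field-like Tambara functor's characteristic agrees with the characteristic of its bottom level (by the corollary following the characteristic definition), so $\ell$ itself is of characteristic coprime to $|G|$, and hence Proposition \ref{prop:fields-of-char-not-p} applies to both $k$ and $\ell$.

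Next I would contemplate the naturality square
\[
\begin{array}{ccc}
\ell & \longrightarrow & k \\
\downarrow & & \downarrow \\
\underline{\ell(G/e)} & \longrightarrow & \underline{k(G/e)}
\end{array}
\]
whose vertical arrows are the canonical maps to the fixed-point Tambara functors. The vertical maps are isomorphisms by Proposition \ref{prop:fields-of-char-not-p}. The bottom arrow is obtained by applying the functor $\underline{(-)}$ to the isomorphism $\ell(G/e) \to k(G/e)$, hence is itself an isomorphism. By the two-out-of-three property (or just commutativity of the square), the top arrow $\ell \to k$ is an isomorphism of $G$-Tambara functors.

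There is essentially no obstacle: the statement follows immediately once one notes that the hypothesis of Proposition \ref{prop:fields-of-char-not-p} is preserved under the isomorphism on $G/e$. The only minor subtlety is checking that the canonical maps $k \to \underline{k(G/e)}$ assemble into a natural transformation; this is automatic from the definition of the canonical map as the unit of the adjunction between $\underline{(-)}$ and evaluation at $G/e$.
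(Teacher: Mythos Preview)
Your proof is correct and is essentially the paper's intended argument: the paper states this corollary without proof immediately after establishing (via Proposition~\ref{prop:fields-of-char-not-p}) that $k \mapsto k(G/e)$ and $R \mapsto \underline{R}$ are inverse equivalences on field-like Tambara functors of characteristic coprime to $|G|$, and your naturality square is exactly how one unwinds that equivalence to conclude. The only point worth noting is that Proposition~\ref{prop:fields-of-char-not-p} is phrased with the hypothesis that $|G|$ is invertible in $k(G/G)$, whereas you invoke it via the characteristic of $k(G/e)$; but since $k(G/e)$ is a product of fields and the proof of Proposition~\ref{prop:fields-of-char-not-p} only uses invertibility of $|H|$ in $k(G/e)$, this translation is harmless and is the same one the paper itself makes implicitly in the preceding corollary.
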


This corollary may be of homotopical use. Namely, it heuristically says that the $G/e$ level homotopy group functor is conservative on $G$-$\mathbb{E}_\infty$-ring spectra whose homotopy groups are appropriately built out of field-like Tambara functors of characteristic not dividing $|G|$. We will see later that these corollaries fail in characteristic $p$.

\section{Clarified Tambara fields}

In this section we aim to classify the clarified Tambara fields. We observed in Proposition \ref{prop:fields-of-char-not-p} that the double coset formula forces many Tambara fields to be isomorphic to fixed-point Tambara functors. This idea continues to bear fruit.

Recall that Artin's lemma states that if a finite group $G$ acts on a field $\F$, then the inclusion of $G$-fixed points $\F^G \rightarrow \F$ is a Galois extension. The Galois group is the homomorphic image of $G$ in $\textrm{Aut}(\F)$.

\begin{proposition}
\label{prop:clarified-fields-with-perfect-fixed-points}
Suppose $k$ is a clarified $C_{p^n}$-Tambara functor such that the fixed-point field $k(C_{p^n}/e)^{C_{p^n}}$ is a perfect characteristic $p$ field. Then the canonical map $k \rightarrow \underline{k(C_{p^n}/e)}$ is an isomorphism.
\end{proposition}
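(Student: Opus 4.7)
The plan is to adapt Proposition \ref{prop:fields-of-char-not-p}, replacing the transfer with the norm. The transfer identity $R_i T_i = \sum_{g \in C_{p^i}} c_g$ specialized to $C_{p^i}$-fixed points becomes multiplication by $|C_{p^i}|$, which vanishes in characteristic $p$, so the transfer can no longer force restriction to be surjective. The multiplicative analogue $R_i N_i = \prod_{g \in C_{p^i}} c_g$ on fixed points, however, becomes the $p^i$-th power map, and this \emph{is} surjective whenever the ambient field is perfect.

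Concretely, write $\F = k(C_{p^n}/e)$, which is a field since $k$ is clarified, and fix $i$. The restriction $R_i : k(C_{p^n}/C_{p^i}) \to \F$ is injective by field-likeness, and its image lies in $\F^{C_{p^i}}$ because $C_{p^i}$ acts trivially on the source through the Weyl quotient $C_{p^n}/C_{p^i}$ while $R_i$ is $C_{p^n}$-equivariant. Applying the multiplicative double coset formula to $R_i \circ N_i$ gives $R_i N_i(x) = \prod_{g \in C_{p^i}} g \cdot x$, the Galois norm of the extension $\F/\F^{C_{p^i}}$; for $x \in \F^{C_{p^i}}$ this collapses to $x^{p^i}$. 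Hence $\mathrm{Im}(R_i) \supseteq \{x^{p^i} : x \in \F^{C_{p^i}}\}$.

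It remains to verify that $\F^{C_{p^i}}$ is perfect, so that the displayed set equals $\F^{C_{p^i}}$. By Artin's lemma the extension $\F^{C_{p^i}}/\F^{C_{p^n}}$ is finite Galois, hence separable, and any finite separable extension of a perfect field is itself perfect: a dimension count using that Frobenius is a field isomorphism onto its image shows $L^p = L$. Therefore $R_i$ surjects onto $\F^{C_{p^i}}$ and, being injective, induces an isomorphism $k(C_{p^n}/C_{p^i}) \cong \F^{C_{p^i}}$; running over all $i$, this is exactly the statement that $k \to \underline{\F}$ is an isomorphism.

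The anticipated obstacle is the swap of transfer for norm together with the observation that the perfection hypothesis must propagate to each intermediate $\F^{C_{p^i}}$, not merely to $\F^{C_{p^n}}$; once these inputs are in hand, the argument is a direct translation of Proposition \ref{prop:fields-of-char-not-p}.
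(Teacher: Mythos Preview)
Your proof is correct and follows essentially the same approach as the paper's: replace the transfer in Proposition \ref{prop:fields-of-char-not-p} by the norm, use the multiplicative double coset formula to identify $R_iN_i$ on $C_{p^i}$-fixed points with the $p^i$-th power map, and invoke Artin's lemma to propagate perfection from $\F^{C_{p^n}}$ to each intermediate $\F^{C_{p^i}}$. The paper's argument is identical in structure.
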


\begin{proof}
As in the argument of Proposition \ref{prop:fields-of-char-not-p}, it suffices to show that each restriction map $k(C_{p^n}/C_{p^s}) \rightarrow k(C_{p^n}/e)$ has image $k(C_{p^n}/e)^{C_{p^s}}$. Since any Galois extension of a perfect field is perfect, our assumption ensures that each fixed-point field $k(C_{p^n}/e)^{C_{p^s}}$ is perfect.

Now consider the restriction of the norm $k(C_{p^n}/e) \rightarrow k(C_{p^n}/C_{p^s})$ to the $C_{p^s}$-fixed points. The double coset formula implies that postcomposing this map with the restriction $k(C_{p^n}/C_{p^s}) \rightarrow k(C_{p^n}/e)^{C_{p^s}}$ is $x \mapsto x^{p^s}$, ie the $s$-fold iterate of the Frobenius map. Since $k(C_{p^n}/e)^{C_{p^s}}$ is perfect, the Frobenius map is an isomorphism, so we observe that the restriction map is surjective, as desired.
\end{proof}

Combining Proposition \ref{prop:fields-of-char-not-p} with Proposition \ref{prop:clarified-fields-with-perfect-fixed-points}, we obtain Theorem \ref{thm:two-cases-of-clarified-fields}. Next, we analyze what happens when $k$ is a clarified Tambara field with $k(C_{p^n}/e)^{C_{p^n}}$ a possibly non-perfect characteristic $p$ field.

\begin{definition}
Let $k$ be a clarified $C_{p^n}$-Tambara functor of characteristic $p$. Writing $\phi$ for the Frobenius endomorphism, we call the subfield \[ \phi^n \left( k(C_{p^n}/e)^{C_{p^n}} \right) \] of $k(C_{p^n}/e)$ the lower bound field of $k$.
\end{definition}

\begin{proposition}
Suppose $k$ is a clarified $C_{p^n}$-Tambara functor of characteristic $p$. Then each $k(C_{p^n}/C_{p^s})$, viewed as a subring of $k(C_{p^n}/e)$, is an intermediate field of the extension $\phi^n \left( k(C_{p^n}/e)^{C_{p^n}} \right) \hookrightarrow k(C_{p^n})$
\end{proposition}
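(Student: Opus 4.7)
The plan is to verify separately that each subring $k(C_{p^n}/C_{p^s}) \subseteq k(C_{p^n}/e)$ is (a) actually a field, and (b) contains the lower bound field $\phi^n\bigl(k(C_{p^n}/e)^{C_{p^n}}\bigr)$. Both statements should fall out of essentially the same computation: feeding a judiciously chosen element into the norm $N_{C_{p^s}}^{e} \colon k(C_{p^n}/e) \to k(C_{p^n}/C_{p^s})$ and reading off its restriction back to $k(C_{p^n}/e)$ via the multiplicative double coset formula.

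First I would observe that because restriction intertwines Weyl group actions along the quotient $C_{p^n} \to C_{p^n}/C_{p^s}$, the subring $k(C_{p^n}/C_{p^s})$ lies inside the subfield $k(C_{p^n}/e)^{C_{p^s}}$ of the ambient field $k(C_{p^n}/e)$. Consequently, every nonzero $y \in k(C_{p^n}/C_{p^s})$ has an inverse $y^{-1} \in k(C_{p^n}/e)^{C_{p^s}}$; the only question is whether $y^{-1}$ lies in the smaller subring.

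For (a), I apply $N_{C_{p^s}}^{e}$ to $y^{-1} \in k(C_{p^n}/e)$; by the double coset formula and the $C_{p^s}$-invariance of $y^{-1}$, the restriction of this norm back to $k(C_{p^n}/e)$ is $\prod_{g \in C_{p^s}} g \cdot y^{-1} = (y^{-1})^{p^s}$. Thus $y^{-p^s}$ lies in $k(C_{p^n}/C_{p^s})$ viewed as a subring, and combined with $y^{p^s - 1}$ (automatically present as a power of $y$), the identity $y^{-1} = y^{p^s - 1} \cdot y^{-p^s}$ forces $y^{-1} \in k(C_{p^n}/C_{p^s})$. For (b), I apply $N_{C_{p^s}}^{e}$ to $x^{p^{n-s}}$ for $x \in k(C_{p^n}/e)^{C_{p^n}}$; because $x^{p^{n-s}}$ is $C_{p^n}$-invariant, the same double coset computation identifies the restriction with $(x^{p^{n-s}})^{p^s} = x^{p^n} = \phi^n(x)$, placing $\phi^n(x)$ in $k(C_{p^n}/C_{p^s})$.

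The main obstacle I anticipate is not technical difficulty but spotting the right substitution to feed into the norm: once one recognizes that in characteristic $p$ the norm of a $C_{p^s}$-fixed element automatically computes a $p^s$-th power, the map $N_{C_{p^s}}^{e}$ manifestly supplies exactly the Frobenius-twisted elements needed both to invert a given $y$ and to realize the lower bound field. In particular, this circumvents any induction on $s$ or $n$, and one never has to prove beforehand that the top level $k(C_{p^n}/C_{p^n})$ is itself a field.
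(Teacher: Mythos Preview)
Your argument is correct, and part (b) is essentially the paper's own argument: both of you exploit that restricting the norm of a $C_{p^s}$-fixed element yields its $p^s$-th power, and hence that the image of the appropriate iterate of Frobenius lands in $k(C_{p^n}/C_{p^s})$. (The paper norms all the way to the top level and restricts back down; you norm only to level $s$. Either works.)

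Part (a), however, is genuinely different from the paper's route. The paper first shows that $k(C_{p^n}/e)$ is \emph{algebraic} over the lower bound field---invoking Artin's lemma to see it is Galois over its $C_{p^n}$-fixed subfield, and noting any field is algebraic over the image of a Frobenius iterate---and then argues that for any $x \in k(C_{p^n}/C_{p^s})$ the subring generated by $x$ and the lower bound field is a finite-dimensional vector space over the latter, hence an Artinian integral domain, hence a field. Your approach bypasses all of this: you simply norm $y^{-1}$ to obtain $y^{-p^s}$ inside $k(C_{p^n}/C_{p^s})$ and rebuild $y^{-1} = y^{p^s-1}\cdot y^{-p^s}$ by hand. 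This is shorter and more elementary, needing neither Artin's lemma nor the Artinian argument. The paper's route, on the other hand, records the auxiliary fact that $k(C_{p^n}/e)$ is algebraic over the lower bound field, which has some independent interest.
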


\begin{proof}
By the double coset formula, the lower bound field of $k$ is contained in the image of the composition of the norm map $k(C_{p^n}/e) \rightarrow k(C_{p^n}/C_{p^n})$ with the restriction $k(C_{p^n}/C_{p^n}) \rightarrow k(C_{p^n}/e)$. In particular, it is contained in the image of all restriction maps. Therefore each $k(C_{p^n}/C_{p^s})$ is a subring of $k(C_{p^n}/e)$ (via the restriction map) containing the lower bound field.

To show each $k(C_{p^n}/C_{p^s})$ is a field, it suffices to show each element has an inverse. Note that $k(C_{p^n}/e)$ is algebraic over the lower bound field, because it is a Galois extension of its $C_{p^n}$-fixed point subfield by Artin's lemma and any characteristic $p$ field is algebraic over the image of an iterate of the Frobenius endomorphism.

Letting $x \in k(C_{p^n}/C_{p^s})$, we see that $x$ is a root of some polynomial over the lower bound field. In particular, the subring of $k(C_{p^n}/C_{p^s})$ generated by $x$ and the lower bound field is a finite-dimensional vector space over the lower bound field, hence is Artinian. Since it is a subring of a field, it is an integral domain, hence a field. Thus $x$ has an inverse in $k(C_{p^n}/C_{p^s})$.
\end{proof}

Let $k$ be a clarified $C_{p^n}$-Tambara functor of characteristic $p$. Then we may construct a $C_{p^{n-1}}$-Tambara functor $\ell_t$ which captures the ``top piece" of $k$ as follows. Observe that for $s \geq 1$ each $k(C_{p^n}/C_{p^s})$ has a $C_{p^{n-1}}$ action with kernel $C_{p^{s-1}}$ (namely, regard the Weyl group as a quotient of $C_{p^n}$). First, set $\ell_t(C_{p^{n-1}}/C_{p^{s-1}}) = k(C_{p^n}/C_{p^s})$ for $1 \leq s \leq n$. Next, define the restriction maps for $\ell_t$ via the restriction maps for $k$. Since the restriction maps for $k$ are appropriately equivariant, so are those for $\ell_t$.

Finally, define the norm and transfer maps for $\ell_t$ via the norm and transfer maps for $k$ with the appropriate domain and codomain. To check that $\ell_t$ is a $C_{p^{n-1}}$-Tambara functor, it suffices to check that the appropriate double coset and exponential formulae are satisfied. In fact, we may do this in a universal example. Since we have already defined norms and transfers on $\ell_t$, via the map $k \rightarrow \underline{k(C_{p^n}/e)}$ it suffices to check that our construction produces a $C_{p^{n-1}}$-Tambara functor when applied to a fixed-point Tambara field $\underline{\F}$. This is clear, however, as our construction produces the fixed-point $C_{p^{n-1}}$-Tambara functor $\underline{\F^{C_{p^{n-1}}}}$.

On the other hand, we may extract a $C_p$-Tambara field $\ell_b$ from $k$ which recovers the ``bottom piece" of $k$ by $\ell_b := \textrm{Res}_1^n k$. Unwinding definitions, we have $\ell_b(C_p/e) = \textrm{Res}_1^n k(C_{p^n}/e)$ and $\ell_b(C_p/C_p) = \textrm{Res}_0^{n-1} k(C_{p^n}/C_p)$, with restriction, norm, and transfer for $k$ giving the restriction, norm, and transfer maps for $\ell_b$.

\begin{proposition}
Every clarified $C_{p^b}$-Tambara field $k$ of characteristic $p$ is obtained from the following: 
\begin{enumerate}
	\item a choice of clarified $C_{p^{n-1}}$-Tambara field $\ell_t$ of characteristic $p$
	\item a choice of $C_{p^n}$-field $\F = k(C_{p^n}/e)$
	\item a choice of clarified $C_p$-Tambara field $\ell_b$ of characteristic $p$.
\end{enumerate}
These choices must satisfy the following compatibility criteria:
\begin{enumerate}
	\item $\ell_b(C_p/C_p) = \textrm{Res}_0^{n-1} \ell_t(C_{p^{n-1}}/e)$
	\item $\ell_b(C_p/e) = \textrm{Res}_1^n \F$
	\item The ring map \[ \ell_t(C_{p^{n-1}}/e) \cong \ell_b(C_p/C_p) \rightarrow \ell_b(C_p/e) \cong \F \] is $C_{p^{n-1}}$-equivariant.
\end{enumerate}
\end{proposition}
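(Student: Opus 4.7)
The forward direction is essentially established in the discussion preceding the proposition: given $k$, the constructions of $\ell_t$ and $\ell_b$ are explicit, and $\F := k(C_{p^n}/e)$ completes the data. I would verify the three compatibility criteria as follows. Criteria (1) and (2) are immediate from the definitions. For criterion (3), the composite map in question is, under the identifications, precisely the restriction $k(C_{p^n}/C_p) \to k(C_{p^n}/e)$; this is $C_{p^n}$-equivariant, and since $C_{p^n}$ acts on $k(C_{p^n}/C_p)$ through its quotient $C_{p^{n-1}}$, one obtains the claimed $C_{p^{n-1}}$-equivariance.

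For the converse, given compatible data $(\ell_t, \F, \ell_b)$, the plan is to construct $k$ levelwise and then verify the Tambara axioms via a universal-example argument. Set $k(C_{p^n}/C_{p^s}) = \ell_t(C_{p^{n-1}}/C_{p^{s-1}})$ for $s \geq 1$ and $k(C_{p^n}/e) = \F$, with Weyl actions inherited from $\ell_t$ and from the $C_{p^n}$-action on $\F$. Take restrictions, norms, and transfers between levels $C_{p^n}/C_{p^s}$ and $C_{p^n}/C_{p^t}$ for $1 \leq s \leq t \leq n$ from $\ell_t$, and those between $C_{p^n}/e$ and $C_{p^n}/C_p$ from $\ell_b$; define the remaining maps between $C_{p^n}/e$ and $C_{p^n}/C_{p^s}$ with $s \geq 2$ as composites through $C_{p^n}/C_p$.

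To verify that this really is a $C_{p^n}$-Tambara functor, I would exhibit $k$ as a sub-Tambara functor of the fixed-point $C_{p^n}$-Tambara functor $\underline{\F}$, in the spirit of the proof of Theorem \ref{thm:field-is-coinduced}. The canonical embeddings $\ell_t \hookrightarrow \underline{\ell_t(C_{p^{n-1}}/e)}$ and $\ell_b \hookrightarrow \underline{\F}$, combined with the $C_{p^{n-1}}$-equivariant ring map of criterion (3), realize each level of $k$ as a specified subring of the corresponding level of $\underline{\F}$. Once those subrings are known to be closed under the restrictions, norms, transfers, and Weyl actions of $\underline{\F}$, the Tambara axioms for $k$ follow automatically by inheritance, and $k$ is manifestly field-like and clarified with $k(C_{p^n}/e) = \F$. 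I expect the main obstacle to be the closure check for composite norms and transfers from $C_{p^n}/e$ up to $C_{p^n}/C_{p^s}$ with $s \geq 2$; for these one applies the double coset formula inside $\underline{\F}$ to split each composite into an $\ell_b$-piece landing in $k(C_{p^n}/C_p)$ followed by an $\ell_t$-piece, so closure reduces to the Tambara structures of $\ell_b$ and $\ell_t$ individually. Finally, the two constructions are mutually inverse because extraction undoes assembly on the nose, and assembly followed by extraction reproduces $k$ by construction inside $\underline{\F}$.
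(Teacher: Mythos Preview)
Your proposal is correct and follows essentially the same approach as the paper: define $k$ levelwise from $\ell_t$ and $\F$, realize it as a subfunctor of the fixed-point Tambara functor $\underline{\F}$, and verify closure under restrictions, norms, and transfers by handling the composite maps through $C_{p^n}/C_p$ as compositions of maps already known to be well-defined for $\ell_b$ and $\ell_t$. Your treatment is in fact more detailed than the paper's, which is quite terse and does not explicitly address the forward direction or the mutual-inverse check that you include.
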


\begin{proof}
Given $\ell_b$, $\ell_t$, and $k(C_{p^n}/e)$ as above, we define a $C_{p^n}$-Tambara functor $k$ as the following subfunctor of the fixed-point $C_{p^n}$-Tambara functor $\underline{\F}$. Set $k(C_{p^n}/e) = \F$ and $k(C_{p^n}/C_{p^s}) = \ell_t(C_{p^{n-1}}/C_{p^{s-1}})$ for $s \geq 1$. The restrictions, norms, and transfers which do not factor nontrivially through $C_{p^n}/C_p$ are well-defined (in the sense that their codomain contains their image) because they are well-defined for $\ell_t$ and $\ell_b$ respectively. The remaining restrictions, norms, and transfers are well-defined because they are compositions of well-defined restrictions, norms, and transfers, respectively.
\end{proof}

This recursively reduces the classification of clarified $C_{p^n}$-Tambara fields of characteristic $p$ to clarified $C_p$-Tambara fields of  characteristic $p$. Let $k$ be such a $C_p$-Tambara functor. If $C_p$ acts trivially on $k(C_p/e)$, then the composition of the norm map with the restriction may be identified with the Frobenius endomorphism, and the transfer map is zero. Thus $k(C_p/C_p)$ may be any subfield of $k(C_p/e)^{C_p}$ containing the image of the Frobenius endomorphism. Therefore we obtain the following.

\begin{proposition}
	A clarified $C_p$-Tambara field of characteristic $p$ with trivial $C_p$-action on the bottom level $C_p/e$ is the same data as a choice of field $k(C_p/e)$ and subfield $k(C_p/C_p)$ which contains the image of the Frobenius endomorphism on $k(C_p/e)$.
\end{proposition}

\begin{example}
We may form a $C_p$-Tambara functor of the above type as follows. First, consider the fixed-point Tambara functor associated to the trivial $C_p$ action on $\F_p(t)$. We may form a sub-Tambara functor with the same bottom level $C_p/e$, but top level equal to the image of the Frobenius endomorphism $\F_p(t^p)$. The inclusion of this sub-functor provides an example of a morphism between Tambara fields which is an isomorphism on the bottom level, but is not an isomorphism.
\end{example}

On the other hand, when the $C_p$-action is nontrivial, it turns out that $k$ must again be a fixed-point functor. Note that if $C_p$ acts nontrivially on $k(C_p/e)$, then it acts faithfully.

\begin{proposition}
	Let $k$ be a field-like $G$-Tambara functor and let $H$ be the kernel of the group homomorphism $G \rightarrow \mathrm{Aut(k(G/e))}$ specified by the $G$-action on $k(G/e)$ (so $G/H$ acts faithfully on $k(G/e)$). Then the restriction $k(G/L) \rightarrow k(G/e)^{L/H}$ is an isomorphism for any normal subgroup $L$ of $G$ containing $H$.
\end{proposition}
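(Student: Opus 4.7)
The plan is to prove that the restriction $k(G/L) \to k(G/e)$ is injective with image exactly $k(G/e)^{L/H}$. Injectivity is immediate from Nakaoka's theorem. For containment of the image in $k(G/e)^{L/H}$, note that restriction is $L$-equivariant with $L$ acting trivially on $k(G/L)$, so the image lies in $k(G/e)^L$; since $H$ lies in the kernel of the $G$-action on $k(G/e)$, the subfields $k(G/e)^L$ and $k(G/e)^{L/H}$ coincide. What remains is surjectivity onto these fixed points.

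For surjectivity I would imitate the strategy of Propositions~\ref{prop:fields-of-char-not-p} and~\ref{prop:clarified-fields-with-perfect-fixed-points}. Applying the double coset formula to the composition of the transfer $T_L^e : k(G/e) \to k(G/L)$ with the restriction $R_L^e$ gives $R_L^e \circ T_L^e(x) = \sum_{g \in L} g \cdot x$, and because $H$ acts trivially on $k(G/e)$, this sum simplifies to $|H| \cdot \mathrm{tr}_{L/H}(x)$, where $\mathrm{tr}_{L/H}$ denotes the Galois trace of the extension $k(G/e)^{L/H} \hookrightarrow k(G/e)$. This extension is Galois with group $L/H$ by Artin's lemma applied to the faithful $L/H$-action, and its trace is surjective by linear independence of characters. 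Hence the image of restriction always contains $|H| \cdot k(G/e)^{L/H}$.

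When $|H|$ is a unit in $k(G/e)^{L/H}$---in particular whenever $H$ is trivial, which covers the only application needed in this paper, namely to clarified $C_p$-Tambara fields with faithful $C_p$-action---this completes the proof. The main obstacle is the case in which the characteristic of $k(G/e)$ divides $|H|$, where the transfer contribution vanishes. In that regime I would additionally invoke the norm: the parallel double coset formula gives $R_L^e \circ N_L^e(x) = N_{L/H}(x)^{|H|}$, so the image of restriction contains all $|H|$-th powers of Galois norms. Combining this with the image of restriction from the top level $k(G/G)$ and exploiting the Artin-Schreier structure of cyclic $p$-extensions in characteristic $p$ (in the spirit of Proposition~\ref{prop:clarified-fields-with-perfect-fixed-points}) should force the image to be all of $k(G/e)^{L/H}$---though I note that the characteristic-$p$ obstructions visible in the preceding section suggest this case may well be vacuous, with no field-like $k$ supporting a genuinely nontrivial kernel $H$ in that setting.
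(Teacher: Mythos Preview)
Your argument for the case in which $|H|$ is invertible (and in particular $H=e$) is essentially the paper's own: push an element up via the transfer, pull it back via restriction, recognise the result as the trace of a finite separable extension, and invoke surjectivity of that trace. The paper phrases this as ``the double coset formula describes the Galois-theoretic trace'' and cites Lang for the surjectivity.

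You have, however, been more careful than the paper on one point. The double coset formula gives $R_L^e T_L^e(x)=\sum_{g\in L} g\cdot x$, and since $H$ acts trivially this is $|H|\cdot\mathrm{tr}_{L/H}(x)$, not $\mathrm{tr}_{L/H}(x)$ itself. The paper silently drops the factor $|H|$; its proof as written therefore only establishes the statement when $|H|$ is a unit in $k(G/e)$.

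Your speculative completion for the remaining case cannot succeed, because the proposition as stated is \emph{false} when the characteristic divides $|H|$. The paper's own preceding example is a counterexample: take $G=C_p$, $k(C_p/e)=\F_p(t)$ with trivial $C_p$-action (so $H=C_p$), and $k(C_p/C_p)=\F_p(t^p)$. Then $L=C_p$ is the only admissible choice, $k(G/e)^{L/H}=k(G/e)^{\{e\}}=\F_p(t)$, and the restriction $\F_p(t^p)\hookrightarrow\F_p(t)$ is not surjective. So the case you flagged is neither vacuous nor repairable; the correct reading of the proposition is under the implicit hypothesis that $|H|$ is invertible (equivalently, for the paper's purposes, $H=e$), which is exactly what the subsequent corollary for $C_p$ with faithful action needs.
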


\begin{proof}
	By assumption $k(G/e)$ is a Galois extension of $k(G/e)^{L/H}$ with Galois group $G/L$. As usual it suffices to show that the restriction is surjective. Note that our assumptions imply that the double coset formula describes the Galois-theoretic trace. In other words, it suffices to observe that the trace map $k(G/e) \rightarrow k(G/e)^{L/H}$ is surjective. But this follows from \cite[Chapter VI, Theorem 5.2]{Lan02} since Galois extensions with finite Galois group are finite and separable.
\end{proof}

Heuristically, this proposition says that any $C_{p^n}$-Tambara field of characteristic $p$ looks like a fixed-point Tambara functor in all levels below a certain point, depending on the kernel of the $C_{p^n}$ action on the bottom level. Above that point, the Tambara field can have ``fixed point jumps" where the restriction between adjacent levels fails to surject onto the fixed points. For $C_p$ we obtain the following.

\begin{corollary}
	Let $k$ be a $C_p$-Tambara field and let $C_p$ act nontrivially on $k(C_p/e)$. Then the canonical map $k \rightarrow \underline{k(C_p/e)}$ is an isomorphism.
\end{corollary}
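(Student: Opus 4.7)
The plan is to deduce this as a near-immediate consequence of the preceding proposition applied to $G = C_p$. The first observation is that since $C_p$ acts nontrivially on $k(C_p/e)$, the kernel $H$ of the homomorphism $C_p \to \mathrm{Aut}(k(C_p/e))$ must be trivial, because $C_p$ has no proper nontrivial subgroups. Thus $H = e$.

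Next, I would verify that the canonical map $k \to \underline{k(C_p/e)}$ is an isomorphism by checking it at every level $C_p/L$ for $L \in \{e, C_p\}$. At the bottom level $L = e$, the map is the identity on $k(C_p/e)$ and thus an isomorphism tautologically. At the top level $L = C_p$, note that $C_p$ is normal in $C_p$ and contains $H = e$, so the hypotheses of the preceding proposition are satisfied, and we conclude that the restriction $k(C_p/C_p) \to k(C_p/e)^{C_p/e} = k(C_p/e)^{C_p}$ is an isomorphism. This is exactly the statement that $k(C_p/C_p) \to \underline{k(C_p/e)}(C_p/C_p)$ is an isomorphism.

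Since the canonical map $k \to \underline{k(C_p/e)}$ is a morphism of $C_p$-Tambara functors which is an isomorphism at each level, it is an isomorphism of Tambara functors. There is essentially no obstacle here: the content of the result has already been absorbed into the preceding proposition, and the only thing to check is that the hypothesis ``$H$ trivial'' follows from nontriviality of the action, which is automatic for a group of prime order.
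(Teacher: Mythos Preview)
Your proposal is correct and follows exactly the paper's intended approach: the corollary is stated without proof in the paper precisely because it is the specialization of the preceding proposition to $G = C_p$, with the observation that a nontrivial action forces $H = e$ since $C_p$ has no proper nontrivial subgroups. Your level-by-level verification is the natural way to spell this out.
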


This concludes the classification of field-like $C_{p^n}$-Tambara functors.

\bibliographystyle{alpha}
\phantomsection\addcontentsline{toc}{section}{\refname}
\bibliography{ref}
\end{document}